\def\@begintheorem#1#2[#3]{%
  \pushQED{\qedthm}\deferred@thm@head{\the\thm@headfont \thm@indent
    \@ifempty{#1}{\let\thmname\@gobble}{\let\thmname\@iden}%
    \@ifempty{#2}{\let\thmnumber\@gobble}{\let\thmnumber\@iden}%
    \@ifempty{#3}{\let\thmnote\@gobble}{\let\thmnote\@iden}%
    \thm@swap\swappedhead\thmhead{#1}{#2}{#3}%
    \the\thm@headpunct
    \thmheadnl 
    \hskip\thm@headsep
  }%
  \ignorespaces}
\def\@endtheorem{\popQED\endtrivlist\@endpefalse }
\theoremstyle{definition}
\newtheorem{theorem}{Theorem}[section]
\newtheorem{lemma}[theorem]{Lemma}
\newtheorem{corollary}[theorem]{Corollary}
\newtheorem{proposition}[theorem]{Proposition}
\def\E{{\mathbb E}}
\def\R{{\mathbb R}}
\def\C{{\mathbb C}}
\def\P{{\mathbb P}}
\def\Z{{\mathbb Z}}
\def\Var{{\rm Var}}
\def\ep{\varepsilon}
\def\phi{\varphi}
\def\be{\begin{equation}}
\def\en{\end{equation}}
\def\bee{\begin{eqnarray*}}
\def\ene{\end{eqnarray*}}
\begin{document}

\title{Entropic CLT for smoothed convolutions \\
and associated entropy bounds
}

\author{Sergey G. Bobkov$^{1}$}
\thanks{1) 
School of Mathematics, University of Minnesota, Minneapolis, MN 55455 USA.
Research was partially supported by NSF grant DMS-1855575.}

\author{Arnaud Marsiglietti$^{2}$}
\thanks{2)
Department of Mathematics, University of Florida, Gainesville, FL 32611, USA}

\subjclass[2010]
{Primary 60E, 60F} 
\keywords{Central limit theorem, Entropic CLT}

\begin{abstract}
We explore an asymptotic behavior of entropies for sums of independent 
random variables that are convolved with a small continuous noise.
\end{abstract}

\maketitle

\section{Introduction}

Let $(X_n)_{n \geq 1}$ be independent, identically distributed (i.i.d.) 
random vectors in $\R^d$ with an isotropic distribution, that is,
with mean zero and an identity covariance matrix. 
By the central limit theorem (CLT), given a random vector $X$ in $\R^d$, 
independent of all $X_n$'s, the normalized sums
\be\label{(1.1)}
Z_n = \frac{1}{\sqrt{n}}\,(X + X_1 + \dots + X_n)
\en
are convergent weakly in distribution as $n \rightarrow \infty$ to the 
standard normal random vector $Z$ with density
\be\label{(1.2)}
\varphi(x) = \frac{1}{(2\pi)^{d/2}}\,e^{-|x|^2/2}, \qquad x \in \R^d.
\en
Suppose that $X$ has a finite second moment and an absolutely continuous
distribution, so that $Z_n$ have some densities $p_n$. A natural question 
of interest is whether or not this property (that is, the weak CLT)
may be strengthened as convergence of entropies
$$
h(Z_n) = -\int_{\R^d} p_n(x)\,\log p_n(x)\,dx
$$ 
to the entropy of the Gaussian limit $Z$. The usual entropic CLT corresponds 
to the i.i.d. case with $X=0$. Then, this CLT is known to hold,
if and only if $Z_n$ have densities $p_n$ with finite $h(Z_n)$ 
for some or equivalently all $n$ large enough 
\cite{Ba} 
(see also \cite{Mi}, \cite{A-B-B-N}, \cite{H-V}, \cite{J}, \cite{J-B}, 
\cite{B-C-G}, \cite{E}). 
What also seems remarkable, the presence of a small non-zero noise 
$X/\sqrt{n}$ in \eqref{(1.1)} may potentially enlarge the range of 
applicability of the entropic CLT. Here is one observation in this 
direction in terms of the characteristic function 
$$ 
f(t) = \E\,e^{i\left<t,X\right>}, \quad t \in \R^d. 
$$

\begin{theorem}\label{1.1}
If $f$ is compactly supported, and $X_1$ has a non-lattice distribution, 
then
\be\label{(1.3)}
h(Z_n) \rightarrow h(Z) \quad \mbox{as} \ n \rightarrow \infty.
\en
This convergence also holds for lattice distributions, if $f$ is supported on 
the ball $|t| \leq T$ for some $T>0$ depending on the distribution of $X_1$.
One may take $T = 1/\beta_3$, assuming that the $3$-rd absolute moment
$$
\beta_3 = \sup_{|\theta| = 1} \E\, |\left<X_1,\theta\right>|^3
$$ 
is finite.
\end{theorem}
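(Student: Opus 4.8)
The plan is to work through the characteristic function
$\widehat{p_n}(t)=f(t/\sqrt n)\,g(t/\sqrt n)^n$, where $g(t)=\E\,e^{i\langle t,X_1\rangle}$, first proving a uniform local limit theorem $\|p_n-\varphi\|_\infty\to 0$, and then upgrading this uniform control of the densities to convergence of entropies by means of a tail estimate on $-p_n\log p_n$ that is uniform in $n$. The compact support of $f$ is what makes $\widehat{p_n}$ integrable with estimates that survive the scaling, and the weak CLT (recalled above) plus the elementary convergence of second moments is what controls the tails.

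The first step is to record the behaviour of $g$. By isotropy, $g(t)=1-\tfrac12|t|^2+o(|t|^2)$ as $t\to 0$, and passing to the symmetrization $X_1-X_1'$ and using $1-\cos x\ge\tfrac14x^2$ for $|x|\le 1$ together with the uniform integrability of $\{\langle\theta,X_1-X_1'\rangle^2:|\theta|=1\}$ gives $|g(t)|\le 1-c|t|^2$ on some ball $|t|\le\rho$. On the shell $\rho\le|t|\le T$ one needs $\sup|g|=:1-\delta<1$. For a non-lattice $X_1$ this is automatic for every fixed $T$, since $|g|<1$ off the origin and the shell is compact. In general $T=1/\beta_3$ works: for a one-dimensional $\xi$ with $\E\xi=0$, $\E\xi^2=1$, $\E|\xi|^3=\gamma$, Taylor's formula gives $|\E\,e^{ir\xi}|\le|1-\tfrac12r^2|+\tfrac16\gamma|r|^3\le 1-\tfrac13r^2$ whenever $|r|\le 1/\gamma\le 1$, and taking $\xi=\langle\theta,X_1\rangle$ we have $\gamma\le\beta_3$, so $|g(t)|\le 1-\tfrac13|t|^2$ for $|t|\le 1/\beta_3$.

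Next comes the uniform local CLT. Since $f$ is supported in $\{|t|\le T\}$, the function $\widehat{p_n}$ is continuous with support in $\{|t|\le T\sqrt n\}$, hence integrable, and $|\widehat{p_n}(t)|\le|g(t/\sqrt n)|^n\le e^{-c|t|^2}$ for $|t|\le\rho\sqrt n$ while $|\widehat{p_n}(t)|\le(1-\delta)^n$ for $\rho\sqrt n\le|t|\le T\sqrt n$. Splitting $\int|\widehat{p_n}(t)-e^{-|t|^2/2}|\,dt$ at $|t|=A$: on $|t|\le A$ one has $f(t/\sqrt n)\to 1$ and, by the usual Taylor argument, $g(t/\sqrt n)^n\to e^{-|t|^2/2}$, both uniformly; and the contribution of $|t|>A$ is at most $\int_{|t|>A}e^{-c|t|^2}\,dt+Cn^{d/2}(1-\delta)^n$. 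Letting $n\to\infty$ and then $A\to\infty$ shows $\|\widehat{p_n}-e^{-|\cdot|^2/2}\|_{1}\to 0$, so by Fourier inversion $\|p_n-\varphi\|_\infty\to 0$; in particular $M:=\sup_n\|p_n\|_\infty<\infty$.

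Finally, entropy convergence. Fix $R>0$ and write $h(Z_n)=-\int_{|x|\le R}p_n\log p_n+\int_{|x|>R}p_n\log(1/p_n)$. The first integral tends to $-\int_{|x|\le R}\varphi\log\varphi$, since $u\mapsto u\log u$ is uniformly continuous on $[0,M]$ and $p_n\to\varphi$ uniformly on the bounded set $\{|x|\le R\}$. For the tail, the inequality $\log u\le u-1$ gives, at every $x$, $p_n\log(1/p_n)\le\varphi-p_n+p_n\log(1/\varphi)\le\varphi+p_n\bigl(\tfrac12|x|^2+c_d\bigr)$, while on $\{p_n\ge 1\}$ one has $|p_n\log(1/p_n)|\le C_M\,p_n$; hence $\bigl|\int_{|x|>R}p_n\log(1/p_n)\bigr|\le\int_{|x|>R}\varphi+\tfrac12\int_{|x|>R}|x|^2p_n+(c_d+C_M)\int_{|x|>R}p_n$. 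Here $\int_{|x|>R}p_n\le R^{-2}\E|Z_n|^2=R^{-2}(d+\E|X|^2/n)$ is uniformly small, and $\int_{|x|>R}|x|^2p_n\to 0$ uniformly in $n$ because $\{|Z_n|^2\}$ is uniformly integrable — indeed $Z_n\to Z$ weakly and $\E|Z_n|^2=d+\E|X|^2/n\to d=\E|Z|^2$, and convergence in distribution together with convergence of the $p$-th moments forces $L^p$-uniform integrability. Thus $h(Z_n)=-\int_{|x|\le R}p_n\log p_n+r_n(R)$ with $\sup_n|r_n(R)|\to 0$ as $R\to\infty$; sending $n\to\infty$ and then $R\to\infty$ yields $h(Z_n)\to-\int\varphi\log\varphi=h(Z)$. (The easy half $\limsup_n h(Z_n)\le h(Z)$ could alternatively be read off from the maximum-entropy bound $h(Z_n)\le\tfrac d2\log\bigl(\tfrac{2\pi e}{d}\,\E|Z_n|^2\bigr)$.) I expect the genuine obstacle to lie in this last step rather than in the local CLT: uniform convergence of the densities is by itself far from enough for entropy convergence, and the delicate point is that controlling $\int_{|x|>R}p_n\log(1/p_n)$ requires a uniform second-moment tail bound, i.e. the uniform integrability of $|Z_n|^2$, which must be extracted from the weak CLT and the convergence of variances.
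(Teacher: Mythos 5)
Your proof is correct, and it reproduces the paper's overall architecture — a uniform local limit theorem $\|p_n-\varphi\|_\infty\to 0$ under the compact-support hypothesis on $f$, followed by an upgrade from density convergence to entropy convergence — but the two halves are executed differently. For the first half you reprove the local CLT by the standard Fourier-inversion argument, including a correct derivation of the admissible radius $T=1/\beta_3$ from the third-moment Taylor bound $|\E\,e^{ir\xi}|\le 1-\tfrac13 r^2$; the paper simply imports this step as Lemma \ref{4.1}, i.e.\ Theorem 1.3 of \cite{B-M}. For the second half the routes genuinely diverge: the paper passes from $\|p_n-\varphi\|_\infty\to 0$ to $\|p_n-\varphi\|_2\to 0$ and invokes Proposition \ref{3.1}, whose engine is the quantitative estimate $D(X||Z)\le c_d\,\Delta\,\log^{\frac{d+4}{4}}(1/\Delta)$ of Proposition \ref{2.1}, with the tail $\int_{|x|\ge T}|x|^2 p$ controlled by the $L^2$-distance via Cauchy--Schwarz (Lemma \ref{2.3}); you instead argue qualitatively, handling $\{|x|\le R\}$ by uniform convergence of $p_n$ together with uniform continuity of $u\log u$ on $[0,M]$, and absorbing the tail by the uniform integrability of $|Z_n|^2$, which you correctly extract from $Z_n\Rightarrow Z$ combined with $\E\,|Z_n|^2=d+\E\,|X|^2/n\to d=\E\,|Z|^2$. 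Both tail arguments are sound. What the paper's $L^2$ machinery buys is an explicit rate and a reusable criterion (Proposition \ref{3.1} is also what drives Theorem \ref{1.2} and carries a converse direction), at the cost of the rescaling $\tilde Z_n=b_nZ_n$ needed to meet the exact normalization $\E\,|X|^2=d$; your argument is softer but more self-contained, and your closing diagnosis — that the real content lies in the upgrade step rather than in the local CLT — is exactly borne out by the paper, which devotes all of Section \ref{2} to it.
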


The assumption of compactness on the support of the characteristic function of $X$ requires its density $p$ to be the restriction to $\R^d$ of an entire function on $\C^d$ of exponential type by Paley-Wiener theorems (cf. e.g. \cite{R}).

The entropic CLT \eqref{(1.3)} may equivalently be stated as the convergence 
$$
D(Z_n||Z) = \int_{\R^d}  p_n(x)\,\log \frac{p_n(x)}{\varphi(x)}\,dx 
 \ \rightarrow \ 0 \quad (n \rightarrow \infty)
$$
for the Kullback-Leibler distance (also called relative entropy or 
an informational divergence). It belongs to the family of so-called
strong (informational) distances, which dominate many other metrics
that are used in usual CLT's about the weak convergence of probability
distributions. As was mentioned to us by one of the referees, one
immediate consequence from \eqref{(1.3)} is the CLT for non-smoothed
normalized sums with respect to the Kantorovich transport distance $W_2$
(cf. Remark 4.4 for details).

In general, the hypothesis on the support of $f$ in Theorem \ref{1.1} 
cannot be removed,
but may be weakened by involving more delicate properties related to
the location of zeros of the characteristic function. This may be seen 
from the following characterization in one important example under
mild regularity assumptions on $f$.

\begin{theorem}\label{1.2}
Suppose that $X_1$ has a uniform distribution on the discrete cube 
$\{-1,1\}^d$, that is, with independent Bernoulli coordinates. Let 
the characteristic function $f$ of $X$ satisfy
\be\label{(1.4)}
\int_{\R^d} |f(t)|\,dt < \infty, \quad
\int_{\R^d} \frac{|f'(t)|}{\|t\|^{d-1}}\,dt < \infty,
\en
where $\|t\|$ denotes the distance from the point $t$ to the lattice 
$\pi \Z^d$. Then, the entropic CLT \eqref{(1.3)} holds true, 
if and only if
\be\label{(1.5)}
f(\pi k) = 0 \quad \mbox{for all} \ \ k \in \Z^d, \ k \neq 0.
\en
\end{theorem}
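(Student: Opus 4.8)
I would prove the two implications of the equivalence separately, the sufficiency of \eqref{(1.5)} carrying essentially all the difficulty. The basic object is the characteristic function of $Z_n$,
\[
g_n(t) \;=\; \E\, e^{i\left<t,Z_n\right>} \;=\; f(t/\sqrt{n})\prod_{j=1}^d \cos^n(t_j/\sqrt{n});
\]
since $|\cos|\le 1$, the first hypothesis in \eqref{(1.4)} gives $g_n\in L^1(\R^d)$, so $p_n$ is bounded and continuous, is recovered by Fourier inversion, and $\|g_n-e^{-|\cdot|^2/2}\|_\infty\le\|p_n-\varphi\|_{L^1}$. I would also record that $\E\,|Z_n|^2=\E\,|X|^2/n+d\to d$ (as $X$ is independent of $S_n=X_1+\dots+X_n$, with $\E\,S_n=0$ and $\E\,|S_n|^2=nd$), so that by the identity $D(Z_n||Z)=-h(Z_n)+\tfrac d2\log(2\pi)+\tfrac12\E\,|Z_n|^2$ the entropic CLT \eqref{(1.3)} is equivalent to $D(Z_n||Z)\to 0$. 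For necessity: if \eqref{(1.3)} holds then $D(Z_n||Z)\to 0$, hence $\|p_n-\varphi\|_{L^1}\to 0$ by Pinsker's inequality, hence $\|g_n-e^{-|\cdot|^2/2}\|_\infty\to 0$; but at $t=\sqrt n\,\pi k$ with $k\in\Z^d\setminus\{0\}$ fixed one has $|g_n(\sqrt n\,\pi k)|=|f(\pi k)\,(-1)^{n(k_1+\dots+k_d)}|=|f(\pi k)|$ while $e^{-n\pi^2|k|^2/2}\to 0$, forcing $f(\pi k)=0$.

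For sufficiency I would assume \eqref{(1.5)} and first prove $\|p_n-\varphi\|_\infty\to 0$. By Fourier inversion and the substitution $t=\sqrt n\,s$,
\[
\|p_n-\varphi\|_\infty \;\le\; \frac{n^{d/2}}{(2\pi)^d}\int_{\R^d}\Big|f(s)\prod_{j=1}^d\cos^n s_j - e^{-n|s|^2/2}\Big|\,ds .
\]
Fixing $0<\delta<\pi/2$, split $\R^d$ into the disjoint balls $B(\pi k,\delta)$, $k\in\Z^d$, and the complement $R_\delta=\{t:\|t\|\ge\delta\}$. On $R_\delta$, $\prod_j|\cos s_j|\le\rho<1$, so using $\int|f|<\infty$ that part is $\lesssim n^{d/2}\rho^n+n^{d/2}\!\int_{|s|\ge\delta}e^{-n|s|^2/2}ds\to 0$; similarly the Gaussian mass inside $\bigcup_{k\ne 0}B(\pi k,\delta)\subset\{|s|\ge\pi-\delta\}$, rescaled by $n^{d/2}$, tends to $0$. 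On $B(0,\delta)$, putting $s=u/\sqrt n$ and using $|\cos\theta|\le e^{-\theta^2/2}$ (for $|\theta|<\pi/2$) and $f(u/\sqrt n)\to1$, dominated convergence with dominating function $2e^{-|u|^2/2}$ shows that this ball contributes exactly the Gaussian $e^{-|\cdot|^2/2}$ in the limit.

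The genuine difficulty, which I expect to be the main technical hurdle, is the sum over $B(\pi k,\delta)$ with $k\ne 0$. Writing $s=\pi k+v$ one has $\prod_j|\cos s_j|^n\le e^{-n|v|^2/2}$, and crucially \eqref{(1.5)} lets me write $f(\pi k+v)=f(\pi k+v)-f(\pi k)=\int_0^1\left<v,\nabla f(\pi k+rv)\right>dr$, gaining a factor $|v|$. Thus the $k\ne 0$ contribution is at most a constant times
\[
n^{d/2}\sum_{k\ne 0}\int_0^1\!\!\int_{|v|\le\delta}|v|\,|\nabla f(\pi k+rv)|\,e^{-n|v|^2/2}\,dv\,dr .
\]
Changing variables $u=rv$ in the joint $(v,r)$ integral (legitimate by Tonelli, all integrands being nonnegative) and integrating out $r$ explicitly rewrites this as $\int\big(\sum_{k\ne 0}\mathbf 1_{B(\pi k,\delta)}(t)\big)|\nabla f(t)|\,\kappa_n(\|t\|)\,dt$, with a radial kernel satisfying $0\le\kappa_n(\rho)\le C_d\,\rho^{1-d}$ uniformly in $n$ and $\kappa_n(\rho)\to 0$ for each fixed $\rho>0$ (the $n^{d/2}$ is exactly absorbed, and the residual factor is an incomplete Gamma integral over $[\rho\sqrt{n/2},\delta\sqrt{n/2}]$). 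Dominated convergence now finishes it: the integrand tends to $0$ almost everywhere and is dominated by $C_d\,|\nabla f(t)|\,\|t\|^{1-d}\mathbf 1_{\{0<\|t\|\le\delta\}}$, which is integrable precisely by the second hypothesis in \eqref{(1.4)}. (This also makes transparent why that hypothesis is needed: it is the summability that kills the infinitely many ``bumps'' of $g_n$ sitting over $\pi\Z^d\setminus\{0\}$, while \eqref{(1.5)} is what produces the gain $|v|$ making each bump small.) Combining the three regions gives $\|p_n-\varphi\|_\infty\to 0$, and the same estimates also yield $\sup_n\|p_n\|_\infty<\infty$.

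Finally I would pass from $\|p_n-\varphi\|_\infty\to 0$ (with $\sup_n\|p_n\|_\infty<\infty$ and $\E\,|Z_n|^2\to d$) to $D(Z_n||Z)=\int p_n\log(p_n/\varphi)\to 0$, hence to \eqref{(1.3)}. Splitting the integral over $\{|x|\le R\}$ and its complement: on the bounded part use the uniform convergence of $p_n$, the bound $p_n(x)\gtrsim e^{-|x|^2/2}$ valid there for large $n$, and the local Lipschitzness of $u\mapsto u\log u$ to get convergence to $0$ for each fixed $R$; on the complement use $-u\log u\le C_\alpha u^{1-\alpha}$ (with $\alpha<\tfrac2{2+d}$), Hölder's inequality against the uniformly bounded second moment, together with $\int_{|x|>R}p_n\le R^{-2}\sup_n\E\,|Z_n|^2$ and the uniform integrability of $|Z_n|^2$ (which follows from the sub-Gaussian tails of $S_n/\sqrt n$ and $\E\,|X|^2<\infty$), to bound the complement uniformly in $n$ by a quantity vanishing as $R\to\infty$. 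This last part is routine and uses \eqref{(1.4)}--\eqref{(1.5)} only through the density convergence already established.
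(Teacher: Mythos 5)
Your argument is correct, and it is worth contrasting with the paper's, which is organized quite differently. The paper routes both implications through Proposition \ref{3.1}: the entropic CLT is shown to be equivalent to $\|p_n-\varphi\|_2\to 0$ (given $\E\,|Z_n|^2\to d$ and, for the converse, uniform boundedness of the $p_n$, supplied by Lemma \ref{4.3}), and the equivalence of the $L^2$/uniform local limit theorems with \eqref{(1.5)} is then quoted from \cite{B-M} rather than proved. Your sufficiency argument essentially reproves that cited local limit theorem, and by exactly the mechanism used in Lemma \ref{4.3}: decompose frequency space into the sets $\{\|t\|\ge\delta\}$ and the balls around $\pi\Z^d$, use $|\cos u|\le e^{-u^2/2}$, gain a factor $|v|$ near each $\pi k\ne 0$ from $f(\pi k)=0$ via Taylor's formula, and dominate the resulting sum by $\int |f'(t)|\,\|t\|^{1-d}\,dt$; your kernel computation ($\kappa_n(\rho)\le C_d\rho^{1-d}$ uniformly, $\kappa_n(\rho)\to 0$ pointwise) is the same incomplete-Gamma estimate that appears there, so this part is a self-contained substitute for the citation. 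Where you genuinely diverge, and to advantage, is the necessity direction: Pinsker's inequality plus the bound $\|g_n-g\|_\infty\le\|p_n-\varphi\|_1$ evaluated at $t=\sqrt{n}\,\pi k$, where $|g_n|=|f(\pi k)|$ while the Gaussian characteristic function vanishes, gives \eqref{(1.5)} directly --- without the uniform boundedness of densities, without the lower bound \eqref{(2.2)}, and in fact without using \eqref{(1.4)} at all, so you prove a slightly stronger converse. Finally, your passage from $\|p_n-\varphi\|_\infty\to 0$ to $D(Z_n||Z)\to 0$ by truncation requires the uniform integrability of $|Z_n|^2$ as a separate (easily verified) input, whereas the paper's Proposition \ref{3.1} extracts the needed tail control from the $L^2$ distance itself via Lemma \ref{2.3}; both are sound, but the paper's version is reusable beyond the smoothed-Bernoulli setting.
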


The second moment assumption on $X$ guarantees that $f$ has a bounded 
continuous derivative $f'(t) = \nabla f(t)$ with its Euclidean 
length $|f'(t)|$. The assumption of integrability of $f$ in \eqref{(1.4)} requires the density of $X$ to be continuous on $\R^d$. In dimension $d=1$, the condition \eqref{(1.4)} 
is fulfilled, as long as both $f$ and $f'$ are in $L^1$. 
If $d \geq 2$, \eqref{(1.4)} is more complicated, but is fulfilled,
for example, under decay assumptions such as
\be\label{(1.6)}
|f(t)| \, \leq \, \frac{c}{((1 + |t_1|) \dots (1 + |t_d|))^\alpha}, \qquad
|f'(t)| \, \leq \, \frac{c}{((1 + |t_1|) \dots (1 + |t_d|))^\alpha},
\en
holding for all $t = (t_1,\dots,t_d) \in \R^d$ with some constants
$\alpha > 1$ and $c>0$.

Although an information-theoretic meaning of the property \eqref{(1.5)} 
is not clear, it is indeed connected with the entropy functional $h(X)$.
Namely, under the conditions \eqref{(1.4)}-\eqref{(1.5)}, it turns out
that the entropy has to be non-negative. This is emphasized in the next 
statement, where we drop the isotropy condition and extend the
Bernoulli case to arbitrary integer valued random vectors. 
As before, we assume that $X$ is a continuous random vector in $\R^d$ 
with finite second moment, which is independent of all $X_n$'s.

\begin{theorem}\label{1.3}
Let $(X_n)_{n \geq 1}$ be a sequence of independent, integer valued 
random vectors, whose components have variance one. Then
$$ 
\limsup_{n \to \infty} \, h(Z_n) \, \leq \, h(X) + h(Z). 
$$
In particular, if $h(Z_n) \rightarrow h(Z)$ as 
$n \rightarrow \infty$, then necessarily $h(X) \geq 0$.
\end{theorem}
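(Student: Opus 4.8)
The plan is to reduce $h(Z_n)$ to a discrete (Shannon) entropy of the integer-valued partial sum $S_n = X_1 + \dots + X_n$ and then bound that discrete entropy by a Gaussian one, so that no limit theorem enters the argument — only the maximum-entropy principle. Put $W_n = X + S_n$, so that $Z_n = W_n/\sqrt n$. Since $X$ has a density $p$, so does $W_n$: it is the mixture $g_n(x) = \sum_{k \in \Z^d} q_n(k)\, p(x-k)$, where $q_n$ is the probability mass function of $S_n$. By the scaling property of differential entropy, $h(Z_n) = h(W_n) - \tfrac d2 \log n$, so it suffices to bound $h(W_n)$ from above.

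The first step is the standard bound for the entropy of a mixture, $h(W_n) \le h(X) + H(S_n)$, where $H(S_n) = -\sum_{k \in \Z^d} q_n(k)\log q_n(k)$. Indeed $g_n \ge q_n(k)\,p(\,\cdot - k)$ pointwise, so on the support of $p(\,\cdot-k)$ one has $\log g_n(x) \ge \log q_n(k) + \log p(x-k)$; writing $h(W_n) = \sum_k q_n(k)\int p(x-k)\big(-\log g_n(x)\big)\,dx$ and inserting this estimate, each inner integral is at most $-\log q_n(k) + h(X)$, and summing against $q_n(k)$ gives the claim. (Equivalently, $h(W_n) - h(X)$ is the mutual information between $W_n$ and $S_n$, hence nonnegative and at most $H(S_n)$.)

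The second step bounds $H(S_n)$ by \emph{continuizing} $S_n$. Let $U$ be uniform on the cube $[-\tfrac12,\tfrac12)^d$ and independent of $S_n$; since the $\Z^d$-translates of this cube tile $\R^d$, the vector $S_n + U$ has the piecewise-constant density taking the value $q_n(k)$ on $k + [-\tfrac12,\tfrac12)^d$, so that $h(S_n+U) = H(S_n)$. Applying the Gaussian maximum-entropy inequality to $S_n+U$, then Hadamard's inequality to its covariance matrix $K_n = \mathrm{Cov}(S_n) + \tfrac1{12}I_d$, together with the fact that independence of the $X_j$'s and the variance-one hypothesis make every diagonal entry of $K_n$ equal to $n + \tfrac1{12}$, one gets
\[
H(S_n) = h(S_n+U) \ \le\ \tfrac12\log\big((2\pi e)^d \det K_n\big) \ \le\ \tfrac d2\log\big(2\pi e\,(n+\tfrac1{12})\big).
\]
Combining the two steps, $h(Z_n) \le h(X) + \tfrac d2\log\big(2\pi e(n+\tfrac1{12})\big) - \tfrac d2\log n = h(X) + h(Z) + \tfrac d2\log\big(1+\tfrac1{12n}\big)$, and letting $n\to\infty$ yields $\limsup_n h(Z_n)\le h(X)+h(Z)$; the second claim of the theorem then follows by subtracting the finite constant $h(Z)$ once $h(Z_n)\to h(Z)$ is assumed.

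I do not expect a deep obstacle. The care points are purely technical: verifying that $h(X)$ and $h(W_n)$ are well defined in $[-\infty,\infty)$ (guaranteed by finiteness of the second moments via the same Gaussian maximum-entropy bound), justifying the interchange of sum and integral in the mixture estimate, and disposing of the degenerate case $h(X)=-\infty$, in which the mixture bound already forces $h(Z_n)=-\infty$. The genuinely hard part — which is not needed here — would be a matching lower bound $\liminf_n h(Z_n)\ge h(X)+h(Z)$, turning the $\limsup$ into a limit; that would require a local central limit theorem for $S_n$ rather than just the maximum-entropy estimate above.
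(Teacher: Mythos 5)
Your proof is correct and follows essentially the same route as the paper: the mixture bound $h(X+S_n)\le h(X)+H(S_n)$ (the paper's Lemma 5.1) followed by continuization of $S_n$ via an independent uniform on the unit cube and the Gaussian maximum-entropy principle (the paper's Lemma 5.2, due to Massey). The only cosmetic difference is that you treat the $d$-dimensional discrete entropy by continuizing directly in $\R^d$ and applying Hadamard's inequality to $\det K_n$, whereas the paper first uses subadditivity of $H$ over coordinates and then the one-dimensional bound; the two are equivalent, and both, as you note, require only uncorrelatedness of the components across summands, which is exactly the weakening recorded in the paper's Theorem 5.3.
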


Actually, the independence assumption may further be weakened to the
uncorrelatedness (as explained in Theorem \ref{5.3} in the end of these notes).

We do not discuss here possible applications of the last conclusion 
in Theorem \ref{1.3}. Let us however stress that obtaining lower and upper 
bounds for the differential entropy, under various hypotheses or for 
different classes of probability distributions on the Euclidean space 
$\R^d$, is in itself an important and self-sufficient direction 
in information theory, which is motivated by many problems and is 
connected with other areas.
For example, applications of lower bounds to rate-distortion 
theory and channel capacity were put forward in \cite{M-K} 
(see also \cite{C-S-H}, \cite{H-J}, \cite{M-N-T}).
Let us also mention Bourgain's slicing problem 
in asymptotic geometric analysis, cf. \cite{Bou}. As a main conjecture, 
it states that, for any convex body $K$ in $\R^d$ there is
a hyperplane $H$ such that the $(d-1)$-dimensional volume of the slice
$H \cap K$ is bounded away from zero by a universal positive constant.
It was shown in \cite{B-Ma1} that the latter may equivalently be formulated 
as the property that, if $X$ is a random vector in $\R^d$ with
an isotropic log-concave distribution then
$$ 
h(X) \geq -cd 
$$
with some universal constant $c > 0$. Besides this conjecture, the past 
few years has seen a growing interest in the study of entropic inequalities 
as they shed new lights on fundamental problems in convex geometry 
(cf. e.g. \cite{B-Ma2}, \cite{C-W}, \cite{C-F-G-L-S-W}). 
We refer to the survey paper \cite{M-M-X} for further details on 
the connections between entropic inequalities and geometric and functional 
inequalities.

The paper is organized as follows. We start in Section \ref{2} with general upper and lower bounds on the Kullback-Leibler distance
\be\label{(1.7)}
D(X||Z) = \int_{\R^d}  p(x)\,\log \frac{p(x)}{\varphi(x)}\,dx
\en
from the distribution of $X$ to the standard normal law 
in terms of the $L^2$-distance 
\be\label{(1.8)}
\Delta = 
\|p-\varphi\|_2 = \bigg(\int_{\R^d} (p(x) - \varphi(x))^2\,dx\bigg)^{1/2}.
\en
Throughout, $Z$ denotes a standard normal random vector in $\R^d$, thus 
with density $\varphi$ as in \eqref{(1.2)} and with characteristic function
$$
g(t) = \E\,e^{i\left<t,Z\right>} = \int_{\R^d} e^{i\left<t,x\right>}\,
\varphi(x)\,dx = e^{-|t|^2/2}, \qquad t \in \R^d.
$$
As usual, the Euclidean space $\R^d$ is endowed with the canonical inner 
product $\left<\cdot,\cdot\right>$ and the norm $|\cdot|$. These bounds 
are applied in Section \ref{3} to express the entropic CLT as convergence of 
densities in $L^2$. Theorem \ref{1.1} and Theorem \ref{1.2} (in 
a somewhat refined form) are proved in Section \ref{4}. Using Proposition \ref{3.1},
the proofs employ recent results obtained in \cite{B-M} on local limit 
theorems with respect to the $L^2$ and $L^\infty$-norms. Theorem \ref{1.3} 
is proved in Section \ref{5}, where we also discuss the connection between 
entropy bounds and the entropic CLT.

\section{General bounds on relative entropy}\label{2}

Throughout this section, let $X$ be a random vector in $\R^d$ with 
density $p$, and let $\Delta$ be defined according to \eqref{(1.8)}.

\begin{proposition}\label{2.1}

Suppose that $\E\,|X|^2 = d$. If $\Delta \leq 1/e$, then
\be\label{(2.1)}
D(X||Z) \, \leq \, c_d\,\Delta\,\log^{\frac{d+4}{4}}(1/\Delta)
\en
with some constant $c_d>0$ depending on $d$ only. Moreover, if\, 
$\sup_x p(x) \leq M$ for some constant $M \geq (2\pi)^{-d/2}$, then
\be\label{(2.2)}
D(X||Z) \, \geq \, \frac{1}{2M}\,\Delta^2.
\en

\end{proposition}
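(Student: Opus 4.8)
The plan is to derive both inequalities from the identity
\[
D(X||Z)=\int_{\R^d}\varphi(x)\,\psi(p(x)/\varphi(x))\,dx,\qquad \psi(t)=t\log t-t+1\ge 0,
\]
which holds simply because $\int p=\int\varphi=1$ and which makes sense even where $p$ vanishes since $\psi(0)=1$. All I need about $\psi$ are the elementary estimates $\psi(t)\ge\frac{(t-1)^2}{2\max(1,t)}$ for $t\ge 0$ (from $\psi(1)=\psi'(1)=0$, $\psi''(t)=1/t$), $\psi(t)\le 1-t$ on $[0,1]$, and $\psi(t)\le(t-1)\log t$ for all $t>0$ (which is just $\log t\le t-1$). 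The lower bound is then immediate: since $\varphi\le(2\pi)^{-d/2}\le M$ and $p\le M$ we have $\max(\varphi,p)\le M$ pointwise, whence $\varphi\,\psi(p/\varphi)\ge\frac{(p-\varphi)^2}{2\max(\varphi,p)}\ge\frac{1}{2M}(p-\varphi)^2$, and integrating gives \eqref{(2.2)}.

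For the upper bound I would set $R=\sqrt{K\log(1/\Delta)}$ with $K=K(d)$ a large constant (so $R\ge\sqrt K$ since $\Delta\le 1/e$), split $\R^d$ into $B_R=\{|x|\le R\}$ and its complement, and split each further by the sign of $p-\varphi$. On $\{p\le\varphi\}\cap B_R$, $\varphi\,\psi(p/\varphi)\le\varphi-p$, so by Cauchy--Schwarz this piece is $\le\int_{B_R}|p-\varphi|\le|B_R|^{1/2}\Delta\lesssim_d\Delta\log^{d/4}(1/\Delta)$; on $\{p\le\varphi\}\cap B_R^c$ it is $\le\int_{B_R^c}\varphi$, a Gaussian tail negligible for $K$ large. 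On $\{p>\varphi\}\cap B_R$, $\varphi\,\psi(p/\varphi)\le(p-\varphi)\log(p/\varphi)$ with $\log(p/\varphi)\ge 0$, so Cauchy--Schwarz reduces matters to bounding $\int_{\{p>\varphi\}\cap B_R}(\log(p/\varphi))^2$; on $B_R$ one has $0\le\log(p/\varphi)=\log p+\tfrac{1}{2}|x|^2+\tfrac{d}{2}\log(2\pi)\le\log^+p+\tfrac{1}{2}R^2+\tfrac{d}{2}\log(2\pi)$, and since $(\log^+s)^2\le\frac{4}{e^2}s$ for all $s\ge 0$ one gets $\int_{B_R}(\log^+p)^2\le\frac{4}{e^2}\int_{\R^d}p=\frac{4}{e^2}$; hence $\int_{\{p>\varphi\}\cap B_R}(\log(p/\varphi))^2\lesssim_{d,K}R^{d+4}\asymp\log^{(d+4)/2}(1/\Delta)$ and this piece is $\lesssim_{d,K}\Delta\log^{(d+4)/4}(1/\Delta)$.

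The region $\{p>\varphi\}\cap B_R^c$ is the crux. Here $\varphi\,\psi(p/\varphi)=p\log(p/\varphi)-(p-\varphi)\le p\log^+p+p\big(\tfrac{1}{2}|x|^2+\tfrac{d}{2}\log(2\pi)\big)$; the part $p\log^+p\le p^2$ integrates to $\le\int_{B_R^c}p^2\le 2\Delta^2+2\int_{B_R^c}\varphi^2$, with $\int_{B_R^c}\varphi^2\le\varphi(R)$ negligible for $K\ge 4$. For $\int_{B_R^c}p\,|x|^2$ and $\int_{B_R^c}p$ the crude tail bounds ($\int_{B_R^c}p\,|x|^2\le d$ outright; $\int_{B_R^c}p\lesssim d/R^2$ by Markov, only $O(1/\log(1/\Delta))$) are far too weak; instead I would use that the second moments match \emph{exactly}, $\int(p-\varphi)|x|^2\,dx=\E|X|^2-\E|Z|^2=0$ and $\int(p-\varphi)\,dx=0$, to transfer these integrals back onto the ball:
\[
\int_{B_R^c}p\,|x|^2=-\int_{B_R}(p-\varphi)|x|^2+\int_{B_R^c}\varphi|x|^2\le R^2|B_R|^{1/2}\Delta+\int_{B_R^c}\varphi|x|^2,
\]
and similarly $\int_{B_R^c}p\le|B_R|^{1/2}\Delta+\int_{B_R^c}\varphi$, the Gaussian tails again negligible and the main terms $\lesssim_{d,K}\Delta\log^{(d+4)/4}(1/\Delta)$ and $\lesssim_{d,K}\Delta\log^{d/4}(1/\Delta)$. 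Summing the four contributions and taking $K=K(d)$ large enough that every Gaussian-tail error is $\le\Delta$ yields \eqref{(2.1)}. I expect this transfer to be the main obstacle: it is what keeps the argument at the $L^2$ level — one must not pass to $\|p-\varphi\|_1$, which under a mere second moment is controlled only by $\Delta^{4/(d+4)}$ — and it is also where the exponent comes from, the factor $R^2|B_R|^{1/2}\asymp R^{(d+4)/2}\asymp(\log(1/\Delta))^{(d+4)/4}$.
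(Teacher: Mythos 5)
Your argument is correct, and its skeleton coincides with the paper's: both proofs truncate at a radius $T\asymp\sqrt{\log(1/\Delta)}$, estimate the bulk by Cauchy--Schwarz against $\Delta$, and handle the tail integrals $\int_{|x|\ge T}p\,|x|^2\,dx$ and $\int_{|x|\ge T}p\,dx$ by the same moment-matching transfer onto the ball that you correctly identify as the crux (this is exactly Lemma \ref{2.3}), with the same $(\log^+ s)^2\le 4e^{-2}s$ device for $\int p\log^+ p$; your lower-bound argument via $\psi(t)\ge(t-1)^2/(2\max(1,t))$ is the Taylor-remainder computation of the paper in only slightly different clothing. The one genuine divergence is the bulk region on $\{p>\varphi\}$: the paper uses $t\log t\le(t-1)+(t-1)^2$, which produces the term $(2\pi)^{d/2}e^{T^2/2}\Delta^2$ and therefore forces a delicate choice $T^2=2\log(1/\Delta)+\frac d2\log\log(1/\Delta)$ balancing this against the Gaussian tail $T^de^{-T^2/2}$; you instead use $\psi(t)\le(t-1)\log t$ together with the pointwise bound $\log(p/\varphi)\le\log^+p+\frac12R^2+\frac d2\log(2\pi)$ on $B_R$ and a second Cauchy--Schwarz, obtaining $\Delta\,R^{(d+4)/2}$ directly. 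This removes the $e^{T^2/2}\Delta^2$ term entirely, so $R$ is constrained only from below (any fixed $K\ge 4$ kills the Gaussian tails) and no optimization is needed --- a modest but real simplification that makes the origin of the exponent $\frac{d+4}{4}$ more transparent, at the price of an extra case split on the sign of $p-\varphi$.
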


First we collect a few elementary large deviation bounds.

\begin{lemma}\label{2.2}

For any $T \geq 1$,
\begin{itemize}
\item[(a)] $\int_{|x| \geq T}\, \varphi(x)\,dx \, \leq \, 
2d\,T^{d-2}\, e^{-T^2/2}$;
\item[(b)] $\int_{|x| \geq T}\, |x|^2\,\varphi(x)\,dx \, \leq \, 
2d\, T^d \, e^{-T^2/2}$.
\end{itemize}

\end{lemma}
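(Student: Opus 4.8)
The plan is to pass to polar coordinates, which turns both quantities into one–dimensional integrals, and then to lean on the integration–by–parts identity $\int_T^\infty r^{k+1}e^{-r^2/2}\,dr = T^ke^{-T^2/2} + k\int_T^\infty r^{k-1}e^{-r^2/2}\,dr$. Writing $F_d(T) = \int_{|x|\ge T}\varphi(x)\,dx$ and $G_d(T) = \int_{|x|\ge T}|x|^2\varphi(x)\,dx$ for the standard Gaussian density $\varphi$ on $\R^d$, polar coordinates give $F_d(T) = c_d\int_T^\infty r^{d-1}e^{-r^2/2}\,dr$ and $G_d(T) = c_d\int_T^\infty r^{d+1}e^{-r^2/2}\,dr$ with $c_d = 2^{1-d/2}/\Gamma(d/2)$. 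I would first record the elementary facts $c_{d+2} = c_d/d$, $c_1 = \sqrt{2/\pi}<1$ and $c_2 = 1$, which give $c_d\le 1$ for all $d\ge 1$ by induction, together with the consequence $c_{d-2} = (d-2)c_d$. Applying the identity above with $k=d-2$ then yields the clean recursion $F_d(T) = c_dT^{d-2}e^{-T^2/2} + F_{d-2}(T)$ for $d\ge 3$, and with $k=d$ it yields the exact relation $G_d(T) = c_dT^de^{-T^2/2} + d\,F_d(T)$.

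Next I would prove (a) by induction on $d$ in steps of two, establishing the slightly stronger estimate $F_d(T)\le d\,T^{d-2}e^{-T^2/2}$ for $T\ge 1$. The base cases are immediate: $F_1(T) = 2\int_T^\infty\frac{e^{-x^2/2}}{\sqrt{2\pi}}\,dx\le \sqrt{2/\pi}\,T^{-1}e^{-T^2/2}\le T^{-1}e^{-T^2/2}$ by the Mills–ratio bound $\int_T^\infty e^{-x^2/2}\,dx\le T^{-1}e^{-T^2/2}$, while $F_2(T) = e^{-T^2/2}$. In the inductive step, using $T^{d-4}\le T^{d-2}$ for $T\ge 1$ and $c_d\le 1$, the recursion gives $F_d(T)\le (c_d+d-2)T^{d-2}e^{-T^2/2}\le (d-1)T^{d-2}e^{-T^2/2}$, which closes the induction and is stronger than (a).

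For (b) I would use $G_d(T) = c_dT^de^{-T^2/2} + d\,F_d(T)$ together with a case split. If $T\ge\sqrt d$, then (a) gives $G_d(T)\le c_dT^de^{-T^2/2} + d^2T^{d-2}e^{-T^2/2}\le (c_d+d^2/T^2)T^de^{-T^2/2}\le (1+d)T^de^{-T^2/2}\le 2d\,T^de^{-T^2/2}$. If $1\le T\le\sqrt d$ instead, I would note that $G_d(T)$ is non-increasing in $T$, hence $G_d(T)\le G_d(1)\le\int_{\R^d}|x|^2\varphi(x)\,dx = d$, whereas $T\mapsto T^de^{-T^2/2}$ is non-decreasing on $[0,\sqrt d]$, so $2d\,T^de^{-T^2/2}\ge 2d\,e^{-1/2}>d\ge G_d(T)$; together these cover all $T\ge 1$.

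The argument is essentially routine; the only point requiring care is the bookkeeping of constants, i.e.\ keeping every estimate at or below the stated $2d$. This is why (a) is carried through with the sharper constant $d$ rather than $2d$, and why (b) needs the split at $T=\sqrt d$: the target $T\mapsto 2d\,T^de^{-T^2/2}$ is smallest near $T=\sqrt d$, precisely where the polynomial–times–Gaussian profile peaks, so there one falls back on the trivial bound $G_d(T)\le d$.
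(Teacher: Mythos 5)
Your proof is correct, but it runs in the opposite direction from the paper's. The paper proves (b) first, by a single one--variable calculus argument: it considers $u(T)=\int_T^\infty r^{d+1}e^{-r^2/2}\,dr-cT^de^{-T^2/2}$, observes that $u'$ changes sign exactly once and $u(\infty)=0$, and chooses $c$ so that $u(1)=0$, forcing $u\le 0$ on $[1,\infty)$; the resulting constant is $\sqrt{e}\,d<2d$. It then gets (a) from (b) trivially, since $|x|^2\ge T^2\ge 1$ on the domain of integration gives $\int_{|x|\ge T}\varphi\le T^{-2}\int_{|x|\ge T}|x|^2\varphi$. You instead prove (a) first, by a dimension-shift recursion $F_d(T)=c_dT^{d-2}e^{-T^2/2}+F_{d-2}(T)$ obtained from integration by parts, closing an induction in steps of two from the explicit base cases $d=1,2$ (your bookkeeping of $c_d=2^{1-d/2}/\Gamma(d/2)\le 1$ and the Mills-ratio bound all check out), and then deduce (b) from the exact identity $G_d(T)=c_dT^de^{-T^2/2}+d\,F_d(T)$ with a case split at $T=\sqrt{d}$, falling back on the trivial bound $G_d(T)\le d$ where the target function is small. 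Both arguments are elementary and complete; the paper's is shorter and produces the explicit constant $\sqrt{e}\,d$ in one stroke, while yours is more transparent about the dimensional structure (and yields the slightly sharper constant $d$ in (a)) at the cost of the induction and the case analysis in (b).
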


\begin{proof}
Clearly, $(a)$ follows from $(b)$. To derive the second bound, write
\begin{eqnarray}\label{Gauss}
\E\,|Z|^2\,1_{\{|Z| \geq T\}} \, = \, \int_{|x| \geq T} |x|^2\, \varphi(x)\,dx  
 \, = \, \frac{d\omega_d}{(2\pi)^{d/2}}\, \int_T^\infty r^{d+1}\,e^{-r^2/2}\,dr,
\end{eqnarray}
where $\omega_d$ denotes the volume of the unit ball in $\R^d$.
Given $c > 1$, consider the function 
$$
u(T) = \int_T^\infty r^{d+1}\,e^{-r^2/2}\,dr - cT^d\,e^{-T^2/2}.
$$
We have $u(\infty) = 0$ and
$$
u'(T) = \big((c - 1)\,T^2 - cd\big)\, T^{d-1}\,e^{-T^2/2}.
$$
Thus, $u(T)$ is decreasing in some interval 
$0 \leq T < T_0$ and is increasing in $T \geq T_0$. 
Therefore, $u(T) \leq 0$ for all $T \geq 1$, if $u(1) = 0$, that is, for
\begin{eqnarray*}\label{c}
c = \sqrt{e} \int_1^\infty r^{d+1}\,e^{-r^2/2}\,dr.
\end{eqnarray*}
Using \eqref{Gauss}, we obtain
$$
c = \sqrt{e} \, \frac{(2\pi)^{d/2}}{d\omega_d} \E\,|Z|^2\,1_{\{|Z| \geq 1\}} 
\leq \sqrt{e} \, \frac{(2\pi)^{d/2}}{\omega_d},
$$
so
$$
\int_{|x| \geq T} |x|^2\, \varphi(x)\,dx  
 \, = \, 
\frac{d\omega_d}{(2\pi)^{d/2}} \left( u(T) + cT^d \, e^{-T^2/2} \right) 
 \, \leq \, \sqrt{e}\,d\, T^d\, e^{-T^2/2}.
$$
\end{proof}

To get the upper bound \eqref{(2.1)}, we also need to control
the weighted quadratic tails in terms of the $L^2$-distance $\Delta$. 

\begin{lemma}\label{2.3}

If $\E\,|X|^2 = d$, then for all $T \geq 1$,
$$
\int_{|x| \geq T} |x|^2\, p(x)\,dx \, \leq \,
2\,T^{\frac{d+4}{2}}\, \Delta + 2d\,T^d \, e^{-T^2/2}.
$$

\end{lemma}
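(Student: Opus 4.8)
The plan is to decompose $p=\varphi+(p-\varphi)$ and integrate over $\{|x|\ge T\}$, writing
\[
\int_{|x|\ge T}|x|^2 p(x)\,dx \;=\; \int_{|x|\ge T}|x|^2\varphi(x)\,dx \;+\; \int_{|x|\ge T}|x|^2\,(p(x)-\varphi(x))\,dx .
\]
The first integral on the right is precisely what Lemma \ref{2.2}(b) controls, and it produces the term $2d\,T^d e^{-T^2/2}$. Everything then reduces to bounding the second integral by $2\,T^{(d+4)/2}\,\Delta$ (the statement being vacuous when $\Delta=\infty$, so I may assume $\Delta<\infty$).

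The idea for the second integral is to transfer the estimate to the \emph{complementary} ball $\{|x|<T\}$, where the weight $|x|^2$ is bounded and the $L^2$-distance $\Delta$ becomes usable. Since $\E\,|X|^2=d=\E\,|Z|^2$ and both are finite, one has $\int_{\R^d}|x|^2\,(p(x)-\varphi(x))\,dx=0$, hence
\[
\int_{|x|\ge T}|x|^2\,(p(x)-\varphi(x))\,dx \;=\; -\int_{|x|<T}|x|^2\,(p(x)-\varphi(x))\,dx \;\le\; \int_{|x|<T}|x|^2\,|p(x)-\varphi(x)|\,dx ,
\]
all integrals being finite because $p-\varphi$ lies in $L^2$, hence in $L^1$, on the bounded set $\{|x|<T\}$. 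I would then apply the Cauchy--Schwarz inequality with the weight $|x|^2\,1_{\{|x|<T\}}$, bounding the last integral by $\big(\int_{|x|<T}|x|^4\,dx\big)^{1/2}\,\Delta$. A computation in polar coordinates gives $\int_{|x|<T}|x|^4\,dx=\frac{d\,\omega_d}{d+4}\,T^{d+4}$, and an elementary check shows $\frac{d\,\omega_d}{d+4}\le 4$ for every $d\ge 1$ (using $\omega_d\le\omega_5=8\pi^2/15$ together with $\tfrac{d}{d+4}\le\tfrac34$ in low dimensions, and $\omega_d\to 0$ in high ones). Thus the second integral is at most $2\,T^{(d+4)/2}\,\Delta$, and summing the two estimates gives the claim.

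I do not expect a real obstacle; the one point that deserves care is the choice of weight in the Cauchy--Schwarz step. The cruder estimate $\int_{|x|<T}|x|^2|p-\varphi|\,dx\le T^2\int_{|x|<T}|p-\varphi|\,dx\le T^2(\omega_d T^d)^{1/2}\Delta$ would produce the factor $\omega_d^{1/2}$, which exceeds $2$ as soon as $d\ge 3$; integrating the weight $|x|^4$ instead is exactly what keeps the constant equal to $2$ uniformly over all dimensions, and nothing else in the argument depends on $d$ through anything but the already-present factors.
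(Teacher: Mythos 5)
Your proposal is correct and follows essentially the same route as the paper: both split off the Gaussian tail via Lemma \ref{2.2}(b), use the moment normalization $\E\,|X|^2=d$ to move the remaining term onto the ball $\{|x|\le T\}$, and apply Cauchy--Schwarz with the weight $|x|^4$ to get $\big(\tfrac{d\omega_d}{d+4}\big)^{1/2}T^{(d+4)/2}\Delta \le 2\,T^{(d+4)/2}\Delta$. Your remark about why the weight $|x|^4$ (rather than the crude bound $|x|^2\le T^2$) is needed to keep the constant dimension-free matches the paper's computation exactly.
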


\begin{proof}
We have
\bee
\int_{|x| \geq T} |x|^2\, p(x)\,dx 
 & = &  
d - \int_{|x| \leq T} |x|^2 p(x)\,dx \\
& = & 
\int_{|x| \leq T} |x|^2\,(\varphi(x) - p(x))\,dx  +
\int_{|x| \geq T} |x|^2\, \varphi(x)\,dx \\
& \leq & 
\int_{|x| \leq T} |x|^2\,|p(x) - \varphi(x)|\,dx  +
\int_{|x| \geq T} |x|^2\, \varphi(x)\,dx. \\
\ene
The last integral is
bounded by $2d\,T^d \, e^{-T^2/2}$. Also, by the Cauchy inequality,
$$
\bigg(\int_{|x| \leq T} |x|^2\,|p(x) - \varphi(x)|\,dx\bigg)^2 \, \leq \,
\int_{|x| \leq T} |x|^4\,dx \int_{\R^d} (p(x) - \varphi(x))^2\,dx
 \, = \,
\frac{d\omega_d}{d+4}\,T^{d+4} \,\Delta^2,
$$
where $\omega_d$ is the volume of the unit ball in $\R^d$.
Here, $\frac{d\omega_d}{d+4} < 4$.
\end{proof}

\begin{lemma}\label{2.4}

For all $T \geq 1$,
\begin{eqnarray}\label{(2.3)}
D(X||Z) 
 & \leq &
2d\,T^{d-2}\, e^{-T^2/2} +
(2\pi)^{d/2} \int_{|x| \leq T} (p(x) - \varphi(x))^2\, e^{|x|^2/2}\ dx 
\nonumber \\
 & & + \ 
\frac{2d-1}{2} \int_{|x| \geq T} 
|x|^2\, p(x)\, dx + \int_{|x| \geq T} p\log p\,dx. 
\end{eqnarray}

\end{lemma}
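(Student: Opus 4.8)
The plan is to combine a pointwise bound with a split of the relative entropy integral over the ball $\{|x|\le T\}$ and its complement. The pointwise bound I would use is
$$
t\log\frac{t}{s} \;\le\; \frac{(t-s)^2}{s} + (t-s), \qquad t\ge 0,\ s>0
$$
(with $0\log 0 = 0$), which after the substitution $u = t/s$ reduces to $u\log u - u + 1 \le (u-1)^2$; this is elementary, since both sides and their first derivatives agree at $u=1$, the difference of the second derivatives is $2-1/u$, and at the endpoint $u=0$ both sides equal $1$, so a short sign analysis settles it for all $u\ge0$.

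Applying the pointwise bound with $t = p(x)$, $s = \varphi(x)$ on $\{|x|\le T\}$, and using $1/\varphi(x)=(2\pi)^{d/2}e^{|x|^2/2}$, one gets
$$
\int_{|x|\le T} p\log\frac{p}{\varphi}\,dx \;\le\; (2\pi)^{d/2}\!\int_{|x|\le T}(p-\varphi)^2 e^{|x|^2/2}\,dx \;+\; \int_{|x|\le T}(p-\varphi)\,dx .
$$
Now $\int_{|x|\le T}(p-\varphi)\,dx = \int_{|x|>T}\varphi\,dx - \int_{|x|>T}p\,dx$, and Lemma \ref{2.2}(a) bounds the first summand by $2dT^{d-2}e^{-T^2/2}$; I would deliberately retain the term $-\int_{|x|>T}p\,dx$ instead of dropping it. On the complement, writing $-\log\varphi(x) = \tfrac d2\log(2\pi) + \tfrac12|x|^2$ gives
$$
\int_{|x|>T} p\log\frac{p}{\varphi}\,dx \;=\; \int_{|x|>T} p\log p\,dx \,+\, \tfrac d2\log(2\pi)\!\int_{|x|>T}p\,dx \,+\, \tfrac12\!\int_{|x|>T}|x|^2 p\,dx .
$$

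Adding the two pieces, the total coefficient of $\int_{|x|>T}p\,dx$ becomes $a:=\tfrac d2\log(2\pi)-1$, so it remains to check $a\int_{|x|>T}p\,dx \le (d-1)\int_{|x|>T}|x|^2 p\,dx$. Since $T\ge1$ forces $|x|^2\ge1$ on the domain, $0\le\int_{|x|>T}p\,dx\le\int_{|x|>T}|x|^2 p\,dx$, whence $a\int_{|x|>T}p\,dx\le\max(a,0)\int_{|x|>T}|x|^2 p\,dx$, and $\max(a,0)\le d-1$ because $\log(2\pi)<2$ (the case $a\le0$, e.g. $d=1$, being trivial). This is exactly what turns $\tfrac12\int_{|x|>T}|x|^2p\,dx$ into $\tfrac{2d-1}{2}\int_{|x|>T}|x|^2p\,dx$, and collecting all terms gives \eqref{(2.3)}. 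The one place that needs care is precisely this constant: the crude estimate $-\log\varphi(x)\le\tfrac{2d+1}{2}|x|^2$ on $|x|\ge1$ is off by one unit, and the stated $\tfrac{2d-1}{2}$ is recovered only by using the retained mass term $-\int_{|x|>T}p\,dx$ to absorb part of $\tfrac d2\log(2\pi)\int_{|x|>T}p\,dx$ together with $\log(2\pi)<2$. There is no integrability obstruction: if the right-hand side of \eqref{(2.3)} is infinite the claim is vacuous, and the finite second moment makes the negative part of $\int_{|x|>T}p\log p\,dx$ integrable via $-p\log p\le -p\log\varphi+\varphi$.
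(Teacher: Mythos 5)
Your proof is correct and follows essentially the same route as the paper's: the pointwise bound $t\log\frac{t}{s}\le\frac{(t-s)^2}{s}+(t-s)$ is, after multiplying through by $\varphi$, exactly the paper's inequality $t\log t\le(t-1)+(t-1)^2$ applied to $t=p/\varphi$, and the rest of the bookkeeping (Lemma \ref{2.2}(a) for the Gaussian tail, retaining $-\int_{|x|>T}p\,dx$, expanding $-\log\varphi$ on the complement, and using $\log(2\pi)<2$ together with $|x|\ge T\ge1$ to reach the coefficient $\tfrac{2d-1}{2}$) matches the paper's proof step for step. No gaps.
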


\begin{proof}
In definition \eqref{(1.8)}, we split the integration 
into the two regions. Using the inequality
$t \log t \leq (t-1) + (t-1)^2$, $t \geq 0$, and applying 
the first bound of Lemma \ref{2.2}, we have
\bee
\int_{|x| \leq T} \frac{p}{\varphi} \log \frac{p}{\varphi}\ \varphi\,dx
 & \leq &
\int_{|x| \leq T} \Big(\frac{p}{\varphi} - 1\Big)\, \varphi\,dx +
\int_{|x| \leq T} \Big(\frac{p}{\varphi} - 1\Big)^2\, \varphi\,dx \\
 & = & 
\int_{|x| \geq T} (\varphi - p)\, dx +
\int_{|x| \leq T} \frac{(p - \varphi)^2}{\varphi}\ dx \\
 & \leq &
2d\,T^{d-2}\,e^{-T^2/2} - \int_{|x| \geq T} p\, dx +
(2\pi)^{d/2} \int_{|x| \leq T} (p(x) - \varphi(x))^2\, e^{|x|^2/2}\, dx.
\ene
For the second region $|x| \geq T$, just write
\bee
\int_{|x| \geq T} p \log \frac{p}{\varphi}\, dx 
 & = &
\int_{|x| \geq T} p \log p\,dx \\
 & & + \
\frac{d}{2}\,\log (2\pi) \int_{|x| \geq T} p\,dx +
\frac{1}{2} \int_{|x| \geq T} |x|^2\, p(x)\, dx.
\ene
Combining these relations and noting that $\log(2\pi) < 2$, we thus get
\bee
D(X||Z) 
 & \leq &
2d\,T^{d-2}\, e^{-T^2/2} +
(2\pi)^{d/2} \int_{|x| \leq T} (p(x) - \varphi(x))^2\, e^{|x|^2/2}\ dx \\
 & & + \ 
(d-1) \int_{|x| \geq T} p(x)\, dx +
\frac{1}{2} \int_{|x| \geq T} |x|^2\, p(x)\, dx + \int_{|x| \geq T} p\log p\,dx. 
\ene
\end{proof}

As a consequence, we obtain:

\begin{lemma}\label{2.5}

For all $T \geq 1$,
$$
D(X||Z) \ \leq \ (2d+1)\,T^{d-1}\,e^{-T^2/2} +
\big((2\pi)^{d/2}+1\big)\,e^{T^2/2}\, \Delta^2 +
d\int_{|x| \geq T} |x|^2\, p(x)\, dx.
$$

\end{lemma}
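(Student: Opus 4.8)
The plan is to feed the four-term estimate of Lemma~\ref{2.4} into a term-by-term bookkeeping, where the only genuinely delicate piece is the tail entropy integral $\int_{|x| \geq T} p \log p\,dx$.

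First I would dispose of the easy terms. On the ball $|x| \leq T$ one has $e^{|x|^2/2} \leq e^{T^2/2}$, so the term $(2\pi)^{d/2}\int_{|x| \leq T}(p-\varphi)^2\,e^{|x|^2/2}\,dx$ appearing in Lemma~\ref{2.4} is at most $(2\pi)^{d/2}\,e^{T^2/2} \int_{\R^d} (p-\varphi)^2\,dx = (2\pi)^{d/2}\,e^{T^2/2}\,\Delta^2$. Since $T \geq 1$, the first term is at most $2d\,T^{d-1}\,e^{-T^2/2}$ (replacing $T^{d-2}$ by $T^{d-1}$), and the coefficient $\frac{2d-1}{2}$ of the weighted tail $\int_{|x| \geq T} |x|^2 p\,dx$ is $\leq d$. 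So everything reduces to the single estimate
$$
\int_{|x| \geq T} p \log p\,dx \ \leq \ T^{d-1}\,e^{-T^2/2} + e^{T^2/2}\,\Delta^2 ,
$$
after which the four contributions add up to exactly $(2d+1)\,T^{d-1}\,e^{-T^2/2} + ((2\pi)^{d/2}+1)\,e^{T^2/2}\,\Delta^2 + d\int_{|x| \geq T} |x|^2 p\,dx$, which is the assertion.

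For the displayed inequality I would split the domain $\{|x| \geq T\}$ according to whether $p \leq 1$ or $p > 1$. On $\{p \leq 1\}$ the integrand $p\log p$ is nonpositive and can simply be dropped. On $\{p > 1\}$, the elementary bound $\log p \leq p - 1$ gives $p\log p \leq p^2 - p \leq p^2$, and since $p^2 = (p - \varphi)^2 + 2\varphi p - \varphi^2 \leq (p - \varphi)^2 + 2\varphi p$, one obtains
$$
\int_{\{|x| \geq T,\ p > 1\}} p \log p\,dx \ \leq \ \int_{\R^d} (p - \varphi)^2\,dx \ + \ 2 \int_{\{|x| \geq T,\ p > 1\}} \varphi\,p\,dx .
$$
On $|x| \geq T$ we have $\varphi(x) \leq (2\pi)^{-d/2} e^{-T^2/2}$, and since $\int_{\R^d} p\,dx = 1$ and $2(2\pi)^{-d/2} \leq 2/\sqrt{2\pi} < 1$, the last integral is bounded by $e^{-T^2/2}$. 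Hence $\int_{|x| \geq T} p\log p\,dx \leq \Delta^2 + e^{-T^2/2}$, which is dominated by the right-hand side of the displayed inequality since $e^{T^2/2} \geq 1$ and $T^{d-1} \geq 1$.

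The main (and essentially only) subtle point is the handling of the tail entropy term: one has to cut at the level $p = 1$ rather than at $p = \varphi$, so that the discarded part is automatically $\leq 0$, while the part retained over $\{p > 1\}$ gets absorbed into $\Delta^2$ plus an exponentially small remainder coming from the Gaussian tail estimate $\varphi(x) \leq (2\pi)^{-d/2}e^{-T^2/2}$. Everything else is the routine monotonicity in $T \geq 1$ recorded above, so I do not expect any real difficulty beyond organizing these estimates cleanly.
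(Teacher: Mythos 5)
Your proposal is correct, and the bookkeeping of the first three terms of Lemma~\ref{2.4} is exactly what the paper does; the difference lies entirely in how you handle the tail entropy integral $\int_{|x| \geq T} p\log p\,dx$, and there your route is genuinely different from (and somewhat more elementary than) the paper's. The paper writes $p\log^+ p \leq |p-\varphi|\,\log^+ p + \varphi\,\log^+ p$, bounds the second piece via Cauchy--Schwarz together with the calibration $(\log^+ t)^2 \leq 4e^{-2}t$ (producing a Gaussian-tail term $T^{(d-1)/2}e^{-T^2/2}$ through Lemma~\ref{2.2}), and bounds the first piece by $\frac12(p-\varphi)^2 + \frac12 p$, so that an extra $\frac12\int_{|x|\geq T}p\,dx$ appears and must be absorbed into the weighted second-moment tail using the slack between $\frac{2d-1}{2}$ and $d$. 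You instead cut at the level $p=1$, discard the nonpositive part, and on $\{p>1\}$ use $p\log p \leq p^2 \leq (p-\varphi)^2 + 2\varphi p$, which gives $\Delta^2 + e^{-T^2/2}$ directly with no Cauchy--Schwarz, no logarithm calibration, and no leftover $\int_{|x|\geq T}p\,dx$ to absorb; the Gaussian tail enters only through the trivial pointwise bound $\varphi(x) \leq (2\pi)^{-d/2}e^{-T^2/2}$. All the numerical checks in your argument ($\log p \leq p-1$ for $p>1$, $2(2\pi)^{-d/2} < 1$ for $d \geq 1$, $T^{d-1} \geq 1$ and $e^{T^2/2} \geq 1$ for $T \geq 1$) are valid, and your four contributions sum to exactly the stated constants, so the proof stands as written.
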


\begin{proof}
We use the notation $a^+ = \max(a,0)$.
Subtracting $\varphi(x)$ from $p(x)$ and then adding, one can write
\bee
\int_{|x| \geq T} p\log p\,dx
 & \leq &
\int_{|x| \geq T} p(x)\log^+(p(x))\,dx \\
 & \leq &
\int_{|x| \geq T} |p(x) - \varphi(x)|\, 
\log^+(p(x))\,dx + \int_{|x| \geq T} \varphi(x)\, \log^+(p(x))\,dx.
\ene
Next, let us apply Cauchy's inequality together with the bound
$(\log^+(t))^2 \leq 4e^{-2}\, t$ so that to estimate the last integral 
from above by
$$
\bigg(\int_{|x| \geq T} \varphi(x)^2\,dx\bigg)^{1/2}
\bigg(\int_{|x| \geq T} \big(\log^+(p(x))\big)^2\,dx\bigg)^{1/2}
\leq \, 
\frac{2}{e} \, \bigg(\int_{|x| \geq T} \varphi(x)^2\,dx\bigg)^{1/2}.
$$
Here, using the first bound of Lemma \ref{2.2}, we have 
\bee
\int_{|x| \geq T} \varphi(x)^2\,dx 
 & = & 
\frac{1}{(4\pi)^{d/2}} \int_{|y| \geq T\sqrt{2}} \varphi(y)\,dy \\
 & \leq &
\frac{2d}{(4\pi)^{d/2}}\,(T\sqrt{2})^{d-2}\, e^{-T^2} \, < \, T^{d-1}
e^{-T^2}.
\ene
Therefore,
$$
\int_{|x| \geq T} p\log p\,dx \, \leq \, 
\int_{|x| \geq T} |p(x) - \varphi(x)|\, 
\log^+(p(x))\,dx + T^{\frac{d-1}{2}} e^{-T^2/2}.
$$
To simplify, the last integrand may be bounded by 
$$
\frac{1}{2}\,(p(x) - \varphi(x))^2 + \frac{1}{2}\,\big(\log^+(p(x))\big)^2
\leq \frac{1}{2}\,(p(x) - \varphi(x))^2 + 
\frac{1}{2}\, p(x),
$$
so,
$$
\int_{|x| \geq T} p\log p\,dx  \, \leq \, \frac{1}{2}\,\Delta^2 +
\frac{1}{2} \int_{|x| \geq T} p(x)\,dx + T^{\frac{d-1}{2}} e^{-T^2/2}.
$$
Using this estimate in \eqref{(2.3)} together with
$e^{|x|^2/2} \leq e^{T^2/2}$ for $|x| \leq T$, we get
\bee
D(X||Z) 
 & \leq &
2d\,T^{d-1}\, e^{-T^2/2} +
(2\pi)^{d/2}\,  e^{T^2/2}\int_{|x| \leq T} (p(x) - \varphi(x))^2\,dx \\
 & &
+ \ \frac{2d-1}{2}\, \int_{|x| \geq T} |x|^2\,p(x)\, dx + 
\frac{1}{2}\,\Delta^2 + 
\frac{1}{2} \int_{|x| \geq T} p(x)\,dx + T^{\frac{d-1}{2}} e^{-T^2/2}.
\ene
\end{proof}

\begin{proof}[{\bf Proof of Proposition} \ref{2.1}]
Combining Lemma \ref{2.5} with Lemma \ref{2.3}, we immediately get
$$
D(X||Z) \, \leq \,
(2d^2 + 2d+1)\,T^d\,e^{-T^2/2} + \big((2\pi)^{d/2}+1\big)\,e^{T^2/2}\, \Delta^2 +
2d\,T^{\frac{d+4}{2}}\,\Delta.
$$
To get \eqref{(2.1)}, it remains to take here 
$$
T = \sqrt{2 \log(1/\Delta) + \frac{d}{2}\,\log \log(1/\Delta)}.
$$ 
For the lower bound \eqref{(2.2)}, let us recall that $D(X||Z) = h(Z) - h(X)$.
By Taylor's expansion, for all $t \geq 0$ and $t_0 > 0$,
there is a point $t_1$ between $t$ and $t_0$ such that
$$
t\log t = t_0 \log t_0 + (\log t_0 + 1)(t - t_0) + \frac{(t-t_0)^2}{2 t_1}.
$$
Inserting $t = p(x)$, $t_0 = \varphi(x)$, we obtain a measurable 
function $t_1(x)$ with values between $p(x)$ and $\varphi(x)$, satisfying
$$
p(x)\log p(x) \, = \, \varphi(x) \log \varphi(x) +
(\log \varphi(x) + 1)\, 
(p(x) - \varphi(x)) + \frac{(p(x) - \varphi(x))^2}{2 t_1(x)}.
$$
Let us integrate this equality over $x$ and use $\E\, |X|^2 = d$ to get
$$
-h(X) = -h(Z) + \frac{1}{2} 
\int_{\R^d} \frac{(p(x) - \varphi(x))^2}{t_1(x)}\,dx.
$$
Hence
$$
D(X||Z) = \frac{1}{2} \int_{\R^d}
\frac{(p(x) - \varphi(x))^2}{t_1(x)}\,dx.
$$
It remains to use the assumptions $p(x) \leq M$ and $\varphi(x) \leq M$, 
so that $t_1(x) \leq M$ as well. 
\end{proof}

\section{Topological properties of relative entropy}\label{3}

Applying Proposition \ref{2.1} to a sequence of random vectors, we arrive 
at necessary and sufficient conditions for the convergence in the 
Kullback-Leibler distance $D$ in terms of the $L^2$-distances
$$
\Delta_n = 
\|p_n-\varphi\|_2 = \bigg(\int_{\R^d} (p_n(x) - \varphi(x))^2\,dx\bigg)^{1/2}.
$$
More precisely, we have:

\begin{proposition}\label{3.1}

Let $(Z_n)_{n \geq 1}$ be a sequence of random vectors in $\R^d$ 
with densities $p_n$. Suppose that as $n \rightarrow \infty$
\begin{itemize}
\item[(a)] $\E\, |Z_n|^2 \rightarrow d$;
\item[(b)] $\Delta_n \rightarrow 0$.
\end{itemize}
Then $D(Z_n||Z) \rightarrow 0$ or equivalently $h(Z_n) \rightarrow h(Z)$
as $n \rightarrow \infty$. Conversely, if $p_n$ are uniformly bounded, 
then the conditions $(a)-(b)$ are also necessary for the convergence in $D$.

\end{proposition}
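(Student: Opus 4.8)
The plan is to treat the two directions separately and, for the sufficiency part, to reduce to the exactly-normalized situation covered by Proposition~\ref{2.1}. For the sufficiency of $(a)$–$(b)$ I would first rescale so that the second moment is exactly $d$. Put $\sigma_n^2 = \frac1d\,\E\,|Z_n|^2$, so $\sigma_n \to 1$ by $(a)$, and let $\tilde Z_n = Z_n/\sigma_n$, with density $\tilde p_n(x) = \sigma_n^d\,p_n(\sigma_n x)$ and $\E\,|\tilde Z_n|^2 = d$. Denoting by $D_s$ the dilation $D_s f(x) = s^d f(sx)$, one has $\|D_s f - D_s g\|_2 = s^{d/2}\,\|f-g\|_2$ and $\|D_{\sigma_n}\varphi - \varphi\|_2 \to 0$ (dilations act continuously on $L^2$, or evaluate the Gaussian integral directly), so
\[
\tilde\Delta_n := \|\tilde p_n - \varphi\|_2 \ \le\ \sigma_n^{d/2}\,\|p_n - \varphi\|_2 + \|D_{\sigma_n}\varphi - \varphi\|_2 \ \longrightarrow\ 0 .
\]
For $n$ large enough that $\tilde\Delta_n \le 1/e$, the upper bound \eqref{(2.1)} applied to $\tilde Z_n$ gives $D(\tilde Z_n\|Z) \to 0$; since $\E\,|\tilde Z_n|^2 = d$ this is the same as $h(\tilde Z_n) \to h(Z)$. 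Finally, $h(Z_n) = h(\tilde Z_n) + d\log\sigma_n \to h(Z)$ by the scaling law for differential entropy, and then $D(Z_n\|Z) = h(Z) - h(Z_n) + \tfrac12\big(\E\,|Z_n|^2 - d\big) \to 0$ using $(a)$ once more. (Alternatively, one could simply re-run the proof of Proposition~\ref{2.1}: the normalization $\E\,|X|^2 = d$ enters the estimates only through Lemma~\ref{2.3}, and replacing $d$ there by $\E\,|Z_n|^2$ adds the harmless term $d\,(\E\,|Z_n|^2 - d)^+$.)

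For the necessity of $(b)$ under $\sup_x p_n(x) \le M$, I would record the version of \eqref{(2.2)} that does not presuppose $\E\,|X|^2 = d$. Taylor's formula for $t \mapsto t\log t$ at $t_0$ gives, pointwise for densities $p,\varphi$,
\[
p(x)\log\frac{p(x)}{\varphi(x)} - p(x) + \varphi(x) \ =\ \frac{(p(x) - \varphi(x))^2}{2\,t_1(x)}, \qquad t_1(x)\ \text{between}\ p(x)\ \text{and}\ \varphi(x).
\]
The left-hand side is nonnegative, and since $p - \varphi \in L^1$ with $\int (p-\varphi)\,dx = 0$, integrating yields $D(X\|Z) = \int \frac{(p-\varphi)^2}{2 t_1}\,dx$. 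As $M \ge (2\pi)^{-d/2}$ forces $t_1(x) \le M$ everywhere, we get $D(X\|Z) \ge \frac1{2M}\,\Delta^2$ with no moment hypothesis. Applied to $Z_n$, this gives $\Delta_n^2 \le 2M\,D(Z_n\|Z) \to 0$. (One can instead invoke Pinsker's inequality $D \ge \tfrac12\|p-\varphi\|_1^2$ together with $\Delta_n^2 \le \big(M + (2\pi)^{-d/2}\big)\,\|p_n - \varphi\|_1$.)

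For the necessity of $(a)$: by the maximum-entropy property of Gaussians at fixed covariance, together with $\det \mathrm{Cov}(Z_n) \le \big(\tfrac1d\,\mathrm{tr}\,\mathrm{Cov}(Z_n)\big)^d \le \big(\tfrac1d\,\E\,|Z_n|^2\big)^d$, one has $h(Z_n) \le h(Z) + \tfrac d2\log\big(\E\,|Z_n|^2/d\big)$. Combined with $h(Z_n) = h(Z) + \tfrac12\big(\E\,|Z_n|^2 - d\big) - D(Z_n\|Z)$ and writing $s_n = \E\,|Z_n|^2/d$, this reads $\tfrac d2\,\big(s_n - 1 - \log s_n\big) \le D(Z_n\|Z) \to 0$; since $s \mapsto s - 1 - \log s$ is nonnegative, vanishes only at $s = 1$, and is coercive at $0^+$ and $+\infty$, this forces $s_n \to 1$, i.e.\ $\E\,|Z_n|^2 \to d$.

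The only real obstacle is the mismatch between the exact normalization $\E\,|X|^2 = d$ built into Proposition~\ref{2.1} and the present asymptotic hypotheses; it is absorbed by the dilation trick in the forward direction and by the normalization-free reworkings of \eqref{(2.2)} and of the maximum-entropy bound in the converse. One should also check that all the entropic quantities are finite for $n$ large: $D(Z_n\|Z) \to 0$ gives $D(Z_n\|Z) < \infty$, hence $\E\,|Z_n|^2 < \infty$ (e.g.\ by the Donsker–Varadhan formula with test function $\tfrac14|x|^2$, using $\int e^{|x|^2/4}\varphi\,dx < \infty$), and then $\sup_x p_n \le M$ makes $\int p_n\log p_n\,dx$, and hence $h(Z_n)$, finite real numbers, so that all the identities above are legitimate.
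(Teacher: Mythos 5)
Your proposal is correct and follows essentially the same route as the paper's: rescale to exact second moment $d$ so that the upper bound \eqref{(2.1)} of Proposition~\ref{2.1} applies (the paper handles the $L^2$-continuity of the dilation via Plancherel on characteristic functions, you do it directly on densities), and for the converse use the same Taylor-expansion lower bound together with the coercivity of $t \mapsto t - 1 - \log t$, which the paper packages as Lemma~\ref{3.2} and Corollary~\ref{3.3}. Your observation that the lower bound \eqref{(2.2)} holds without the normalization $\E\,|X|^2 = d$ --- since $\int \big(p\log(p/\varphi) - p + \varphi\big)\,dx = \int (p-\varphi)^2/(2t_1)\,dx$ in any case --- is a genuine small simplification, letting you apply it to $Z_n$ directly rather than to the rescaled $\tilde Z_n$ as the paper does.
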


Before turning to the proof, let us recall a basic abstract definition
of the Kullback-Leibler distance (i.e., relative entropy). Let $X$ and $Y$
be random elements in a measurable space $\Omega$ with distributions $\mu$ 
and $\nu$, respectively. If $\mu$ is absolutely continuous with 
respect to $\nu$ and has density $h = d\mu/d\nu$, 
the relative entropy of $\mu$ with respect to $\nu$ is defined as
$$
D(X||Y) \, = \, D(\mu||\nu) \, = \, 
\int_\Omega h\log h\,d\nu \, = \, \int p\, \log\frac{p}{q}\,d\lambda,
$$
where in the last equality we assume that $\mu$ and $\nu$ have densities 
$p$ and $q$ with respect to the dominating measure $\lambda$ on $\Omega$, 
so that $h = p/q$ (which is well-defined $\lambda$-almost everywhere). 
This definition does not depend on the choice of $\lambda$, and one may
always take $\lambda = \mu + \nu$, for example.
If $\mu$ is not absolutely continuous with respect to $\nu$, one puts 
$D(X||Y) = D(\mu||\nu) = \infty$. For basic properties of this functional,
we refer an interested reader to \cite{E-H}, and here only mention one
well-known relation
$$
\int_\Omega g\,d\mu \, \leq \, D(\mu||\nu) + \log \int_\Omega e^g\,d\nu.
$$
It holds for any measurable function $g$ on $\Omega$ for which 
the right-hand side is finite (this relation easily follows from 
the elementary inequality $xy \leq x\log x - x + e^y$, $x \geq 0$, $y \in \R$).

In the case where $\Omega = \R^d$ with Lebesgue measure
$\lambda$, and choosing $g(x) = \ep\,|x|^2$, $\ep > 0$, we have in particular
$$
\ep\, \E\,|X|^2 \, \leq \, D(X||Y) + \log \, \E\,e^{\ep\, |Y|^2}.
$$
If $Y$ has a normal distribution, the last expectation is finite for some
$\ep > 0$. Therefore, finiteness of $D(X||Y)$ forces the random vector 
$X$ in $\R^d$ to have a finite second moment. One can now introduce
an affine invariant functional
$$
D(X) \ = \, \inf_{Y \ {\rm normal}} D(X||Y),
$$
where the infimum is running over all absolutely continuous normal distributions 
on $\R^d$. Thus, $D(X)$ represents the Kullback-Leibler distance from the 
distribution of $X$ to the class of all non-degenerate Gaussian measures 
on $\R^d$. It is finite, only if the distribution of $X$ is 
absolutely continuous and has a finite second moment, and 
then this infimum is attained on the normal distribution with 
the same mean $a = \E X$ and covariance matrix $V$ as for $X$ (cf. e.g. \cite{Bi}, Section 10.7).

Our next step is to quantify the properties $(a)-(b)$ from
Proposition 3.1 in terms of $D(X||Z)$, where $Z$ is a standard normal
random vector in $\R^d$. Denote by $\varphi_{a,V}$ the density 
of the normal law with these parameters, that is, let $Y$ have density
$$
\varphi_{a,V}(x) \ = \ \frac{1}{(2\pi)^{d/2} \sqrt{{\rm det}(V)}}\,
\exp\Big\{-\frac{1}{2}\left<V^{-1}(x-a),x-a\right>\Big\}, \qquad x \in \R^d,
$$
so that $D(X) = D(X||Y)$.
By the definition, if $X$ has density $p$, we have
\bee
D(X||Z)
 & = & 
\int_{\R^d} p(x) \log \frac{p(x)}{\varphi(x)}\,dx \ = \
\int_{\R^d} p(x) \log \frac{p(x)}{\varphi_{a,V}(x)}\,dx \\
 & & - \ 
\frac{1}{2}\,\log\, {\rm det}(V) - \frac{1}{2}\,\E\,\left<V^{-1}(X-a),X-a\right> + 
\frac{1}{2}\,\E\,|X|^2.
\ene
Simplifying, we obtain an explicit formula
\begin{eqnarray}\label{(3.1)}
D(X||Z) 
 & = & 
D(X) +  \frac{1}{2}\,|a|^2 + \frac{1}{2}\,
\Big(\log\frac{1}{{\rm det}(V)} + {\rm Tr}(V) - d\Big) \nonumber \\
 & = &
D(X) +  \frac{1}{2}\,|a|^2 + \frac{1}{2}\,\sum_{i=1}^d 
\Big(\log\frac{1}{\sigma_i^2} + \sigma_i^2 - 1\Big),
\end{eqnarray}
where $\sigma_i^2$ are eigenvalues of the matrix $V$ ($\sigma_i > 0$).
Note that all the terms on the right-hand side are non-negative. This allows 
us to control the first two moments of $X$ in terms of $D(X||Z)$. 
In particular, $|a|^2 \leq 2\,D(X||Z)$, so that the closeness of $X$ to $Z$ 
in relative entropy implies the closeness of the means.
To come to a similar conclusion about the covariance matrices, consider the
non-negative convex function
$$
\psi(t) = \log\frac{1}{t} + t - 1, \qquad t>0.
$$
We have $\psi(1) = \psi'(1) = 0$ and $\psi''(t) = \frac{1}{t^2}$.
If $|t-1| \leq 1$, by Taylor's formula about the point $t_0 = 1$
with some point $t_1$ between $t$ and $1$,
$$
\psi(t) = \psi(1) + \psi'(1) (t-1) + \psi''(t_1)\, \frac{(t-1)^2}{2}
 \geq \frac{(t-1)^2}{8}.
$$
For the values $t \geq 2$, we have a linear bound
$\log\frac{1}{t} + t - 1 \geq c(t-1)$ with some constant $0<c<1$.
Namely, write the latter inequality as $\log t \leq (1-c)(t-1)$, i.e.,
$u(s) = \frac{\log(1+s)}{s} \leq 1-c$ for $s \geq 1$.
As easy to check, the function $u(s)$ is decreasing on the whole
positive axis, so $u(s) \leq \log 2$ in $s \geq 1$. Hence, one may
take $c = 1 - \log 2 > \frac{1}{8}$, and thus
$\psi(t) \geq \frac{t-1}{8}$. The two bounds yield
$$
\psi(t) \geq \frac{1}{8}\,\min\{|t - 1|, |t - 1|^2\}, \quad t>0.
$$
Let us summarize.

\begin{lemma}\label{3.2}

Given a random vector $X$ with mean $a$ and covariance matrix $V$ 
with eigenvalues $\sigma_i^2$, we have
$$
D(X||Z) \, \geq \, D(X) + \frac{1}{2}\,|a|^2 + \frac{1}{16}\,\sum_{i=1}^d
\min\big\{|\sigma_i^2 - 1|, (\sigma_i^2 - 1)^2\big\}.
$$
In particular, putting $D = D(X||Z)$, we have
\begin{itemize}
\item[(a)] $|a|^2 \leq 2D$;
\item[(b)] $|\sigma_i^2 - 1| \leq 4\sqrt{D} + 16 D$ \ for all $i \leq d$;
\item[(c)] $|\E\, |X|^2 - d\,| \leq 4d\sqrt{D} + 16d\, D$.
\end{itemize}

\end{lemma}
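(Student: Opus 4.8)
The plan is to read everything off the explicit identity \eqref{(3.1)} together with the pointwise estimate for $\psi$ established just above the lemma. Recall that \eqref{(3.1)} says
$D(X||Z) = D(X) + \frac{1}{2}\,|a|^2 + \frac{1}{2}\sum_{i=1}^d \psi(\sigma_i^2)$,
where $\psi(t) = \log\frac1t + t - 1 \geq 0$, so that all three groups of terms on the right are non-negative. Substituting $t = \sigma_i^2$ into the bound $\psi(t) \geq \frac18\,\min\{|t-1|,(t-1)^2\}$ and using $\frac12\cdot\frac18 = \frac1{16}$ gives the displayed main inequality of the lemma at once; here one does not even need to discard the $D(X)$ term.

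For (a), simply drop the non-negative terms $D(X)$ and $\frac1{16}\sum_i \min\{\cdot\}$ from the main inequality to obtain $D \geq \frac12\,|a|^2$. For (b), fix $i$ and put $s = |\sigma_i^2 - 1|$; the main inequality yields $\min\{s,s^2\} \leq 16 D$. If $s \leq 1$ this reads $s^2 \leq 16 D$, hence $s \leq 4\sqrt{D}$; if $s > 1$ it reads $s \leq 16 D$. In either case $s \leq 4\sqrt{D} + 16 D$, which is (b).

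For (c), I would first record $\E\,|X|^2 = |a|^2 + \Tr(V) = |a|^2 + \sum_i \sigma_i^2$, so that $|\,\E\,|X|^2 - d\,| \leq |a|^2 + \sum_i s_i$ with $s_i = |\sigma_i^2-1|$. The only mildly delicate point is that bounding $|a|^2$ and the $s_i$ by $D$ separately would spend the budget $D$ twice and lose a factor; instead one should keep them together. Writing the main inequality (after dropping $D(X)\ge 0$) as $8\,|a|^2 + \sum_i m_i \leq 16 D$ with $m_i := \min\{s_i, s_i^2\}$, one gets simultaneously $|a|^2 + \sum_i m_i \leq 16 D$ and $\sum_i m_i \leq 16 D$. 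Then use the elementary inequality $s_i \leq m_i + \sqrt{m_i}$ (check the two cases $s_i \leq 1$ and $s_i > 1$ as in (b)) together with Cauchy--Schwarz, $\sum_i \sqrt{m_i} \leq \sqrt{d}\,(\sum_i m_i)^{1/2} \leq 4\sqrt{dD}$, to conclude $|a|^2 + \sum_i s_i \leq 16 D + 4\sqrt{dD} \leq 16 d D + 4 d\sqrt{D}$, the last step using $d \geq 1$. There is no real obstacle here: the lemma is bookkeeping built on top of the $\psi$-estimate, and the only place that calls for a little care is this shared-budget argument in part (c).
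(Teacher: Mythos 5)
Your proof is correct and follows essentially the same route as the paper: the main inequality is just the identity (3.1) combined with the pointwise bound $\psi(t)\ge\frac18\min\{|t-1|,(t-1)^2\}$ established immediately before the lemma, and parts (a) and (b) are read off exactly as you do. The paper states (c) without derivation, and your shared-budget argument is a legitimate way to reach the stated constant $4d\sqrt D+16dD$ --- indeed it is needed, since naively adding the bound $|a|^2\le 2D$ to $d$ copies of (b) leaves an extra $2D$ that does not obviously fit under $4d\sqrt D+16dD$ for all values of $D$.
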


Here, the closeness of all $\sigma_i^2$ to 1 may also be stated as 
closeness of $V$ to the identity $d \times d$ matrix $I_d$ in the (squared)
Hilbert-Schmidt norm 
$\|V - I_d\|_{\rm HS}^2 = \sum_{i=1}^d (\sigma_i^2 - 1)^2$.
These bounds have an application in the problem 
where one needs to determine whether or not there is convergence
in relative entropy for a sequence of random vectors. 

\begin{corollary}\label{3.3}

Given a sequence of random vectors
$Z_n$ in $\R^d$ with means $a_n$ and covariance matrices $V_n$, the property 
$D(Z_n||Z) \rightarrow 0$ as 
$n \rightarrow \infty$ is equivalent to the next three conditions:
$$ D(Z_n) \rightarrow 0; \quad a_n \rightarrow 0; \quad V_n \rightarrow I_d. $$

\end{corollary}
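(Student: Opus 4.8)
The whole statement should fall out of the explicit formula \eqref{(3.1)} together with Lemma \ref{3.2}. The key structural fact is that \eqref{(3.1)} decomposes
$D(Z_n\|Z)$ into the three non-negative pieces $D(Z_n)$, $\tfrac12|a_n|^2$ and $\tfrac12\sum_{i=1}^d\psi(\sigma_i^2)$, where $\psi(t)=\log\frac1t+t-1\ge 0$ and $\sigma_i^2$ are the eigenvalues of $V_n$. A sum of non-negative terms tends to $0$ if and only if each term does, so the plan is to match ``each term $\to 0$'' against the three announced conditions $D(Z_n)\to0$, $a_n\to0$, $V_n\to I_d$.

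\textbf{Necessity.} Assuming $D_n:=D(Z_n\|Z)\to 0$, I would first note that $D_n<\infty$ for all large $n$, which (as recalled before Lemma \ref{3.2}) forces $Z_n$ to be absolutely continuous with finite second moment, so that its mean $a_n$ and covariance matrix $V_n$ are well defined and \eqref{(3.1)} applies. Then $D(Z_n)\le D_n\to0$ directly; $|a_n|^2\le 2D_n\to 0$ by Lemma \ref{3.2}(a); and $|\sigma_i^2-1|\le 4\sqrt{D_n}+16D_n\to0$ for each $i\le d$ by Lemma \ref{3.2}(b). To upgrade the last statement to $V_n\to I_d$, I would invoke the identity $\|V_n-I_d\|_{\rm HS}^2=\sum_{i=1}^d(\sigma_i^2-1)^2$ noted right after Lemma \ref{3.2} (valid since $V_n$ is symmetric, hence orthogonally diagonalizable).

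\textbf{Sufficiency.} Conversely, from $D(Z_n)\to0$, $a_n\to0$, $V_n\to I_d$ I would simply substitute back into \eqref{(3.1)}. Here $D(Z_n)\to0$ again guarantees $D(Z_n)<\infty$ eventually, so $a_n,V_n$ are defined and \eqref{(3.1)} is available. The only slightly non-formal point: $V_n\to I_d$ implies $\sigma_i^2\to 1$ for each $i$ — again via $\|V_n-I_d\|_{\rm HS}^2=\sum_i(\sigma_i^2-1)^2$, or by continuity of eigenvalues — and then, since $\psi$ is continuous on $(0,\infty)$ with $\psi(1)=0$, one gets $\sum_{i=1}^d\psi(\sigma_i^2)\to0$. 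Combining with $D(Z_n)\to0$ and $|a_n|^2\to0$ in \eqref{(3.1)} yields $D(Z_n\|Z)\to0$.

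\textbf{Main obstacle.} There is no real difficulty here beyond bookkeeping; the one point worth stating carefully is the equivalence between the matrix convergence $V_n\to I_d$ and the scalar convergences $\sigma_i^2(V_n)\to1$ for all $i$, which rests on the orthogonal diagonalizability of symmetric matrices and the Hilbert--Schmidt identity $\|V_n-I_d\|_{\rm HS}^2=\sum_i(\sigma_i^2-1)^2$. The secondary point is to keep track of when $D(Z_n\|Z)<\infty$ (resp. $D(Z_n)<\infty$), so that it is legitimate to talk about $a_n$ and $V_n$ at all.
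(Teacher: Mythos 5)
Your proposal is correct and follows exactly the route the paper intends: the corollary is presented as an immediate consequence of the decomposition \eqref{(3.1)} into non-negative terms together with the quantitative bounds of Lemma \ref{3.2} and the Hilbert--Schmidt identity $\|V-I_d\|_{\rm HS}^2=\sum_i(\sigma_i^2-1)^2$ noted right after it. The paper gives no separate proof, and your write-up (including the continuity of $\psi$ at $t=1$ for the sufficiency direction and the finiteness bookkeeping) fills in precisely the intended details.
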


\begin{proof}[{\bf Proof of Proposition} \ref{3.1}]
First recall that
$$
D(Z_n || Z) \, = \,
-h(Z_n) + \frac{d}{2}\,\log(2\pi) + \frac{1}{2}\,\E\,|Z_n|^2, \qquad
h(Z) = \frac{d}{2}\,\log(2\pi) + \frac{d}{2}.
$$
Hence, if $\E\,|Z_n|^2 \rightarrow d$ like in $(a)$, then
$D(Z_n||Z) \rightarrow 0 \Leftrightarrow h(Z_n) \rightarrow h(Z)$.
To show that the conditions $(a)-(b)$ are sufficient for the convergence
in $D$, denote by $f_n$ the characteristic functions of $Z_n$. 
By the assumption and applying the Plancherel theorem,
$$
\Delta_n \, = \, (2\pi)^{-d/2}\, \|f_n - g\|_2 \ \rightarrow \ 0
$$
as $n \rightarrow \infty$. Define the random vectors $\tilde Z_n = b_n Z_n$, 
where $b_n^2 = d/\E\,|Z_n|^2$ ($b_n > 0$), so that $\E\, |\tilde Z_n|^2 = d$.
They have densities $\tilde p_n(x) = \frac{1}{b_n^d}\,p_n(\frac{x}{b_n})$ 
with characteristic functions
$$
\tilde f_n(t) = \E\,e^{i\left<t,\tilde Z_n\right>} = f_n(b_n t), \qquad
t \in \R^d.
$$
Using $b_n \rightarrow 1$ and applying 
the Plancherel theorem once more together with the triangle inequality
in $L^2$, we then get
\bee
\tilde \Delta_n
 =
 \ (2\pi)^{-d/2}\,
\|\tilde f_n - g\|_2
& = &
\frac{1}{(2\pi b_n)^{d/2}}\, \|f_n(t) - g(t/b_n)\|_2 \\
 & \leq &
\frac{1}{(2\pi b_n)^{d/2}}\, \|f_n(t) - g(t)\|_2 + 
\frac{1}{(2\pi b_n)^{d/2}}\, \|g(t/b_n) - g(t)\|_2 \\
 & = &
\frac{1}{b_n^{d/2}}\, \Delta_n + \frac{1}{(2\pi b_n)^{d/2}}\, \|g(t/b_n) - g(t)\|_2.
\ene
Here, the last norm tends to zero, so, $\tilde \Delta_n \, \rightarrow \, 0$.
We are in position to apply the upper bound \eqref{(2.1)} 
of Proposition \ref{2.1} to $X = \tilde Z_n$ which yields 
$D(\tilde Z_n||Z) \rightarrow 0$ and thus
\be\label{(3.2)}
D(Z_n||Z) \, = \, D(\tilde Z_n||Z) - d\log b_n + \frac{d}{2}\,(b_n^2 - 1) 
\, \rightarrow \, 0.
\en
Conversely, assuming that $D(Z_n||Z)\, \rightarrow \, 0$ and applying 
Corollary \ref{3.3}, we get the property $(a)$. Hence, 
$b_n^2 = d/\E\,|Z_n|^2 \rightarrow 1$, and $D(\tilde Z_n||Z)\, \rightarrow \, 0$
according to the formula \eqref{(3.2)}. By the assumption, $\tilde p_n$ are
uniformly bounded, that is, $\tilde p_n(x) \leq M$ with some constant $M$.
We are in position to apply the lower bound \eqref{(2.2)} which yields 
$\tilde \Delta_n \rightarrow 0$ and therefore
$$ \Delta_n = b_n^{d/2}\,(2\pi)^{-d/2}\, \|\tilde f_n(t) - g(b_n t)\|_2 \leq b_n^{d/2}\, \tilde \Delta_n + b_n^{d/2}\,(2\pi)^{-d/2}\, \|g(t) - g(b_n t)\|_2 \, \rightarrow \, 0. $$
\end{proof}

\section{Proof of Theorems \ref{1.1}-\ref{1.2}}\label{4}

From now on, let the random vectors $Z_n$ be defined as the normalized sums 
according to \eqref{(1.1)}. The proof of Theorem \ref{1.1} is based on 
the following statement obtained in \cite{B-M}. 

\begin{lemma}(\cite[Theorem 1.3]{B-M})\label{4.1}
There exists $T>0$ depending on the distribution of $X_1$ with the following
property. If $f$ is supported on the ball $|t| \leq T$, then the random vectors 
$Z_n$ have continuous densities $p_n$ such that 
\be\label{(4.1)}
\|p_n - \varphi\|_\infty \, = \, 
\sup_x\, |p_n(x) - \varphi(x)| \, \rightarrow 0 \, \quad \mbox{as} \ \ 
n \rightarrow \infty.
\en
If $\beta_3$ is finite, one may take $T = 1/\beta_3$. If $X_1$ has 
a non-lattice distribution, $T$ may be arbitrary.
\end{lemma}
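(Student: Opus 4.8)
The plan is to prove a uniform (local) limit theorem by comparing the Fourier inversion formulas for $p_n$ and $\varphi$. Writing $v(t) = \E\,e^{i\langle t, X_1\rangle}$ for the characteristic function of $X_1$, the sum $Z_n$ in \eqref{(1.1)} has characteristic function $w_n(t) = f(t/\sqrt{n})\, v(t/\sqrt{n})^n$; since $f$ is supported in $\{|t|\le T\}$, the function $w_n$ is bounded by $1$ and supported in $\{|t|\le T\sqrt{n}\}$, hence lies in $L^1(\R^d)$, so $Z_n$ has a continuous density given by Fourier inversion, and likewise for $\varphi$ with $g(t)=e^{-|t|^2/2}$. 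Therefore
$$ \|p_n - \varphi\|_\infty \, \le \, \frac{1}{(2\pi)^d}\int_{\R^d}|w_n(t) - g(t)|\,dt \, \le \, \frac{1}{(2\pi)^d}\Big(\int_{|t|\le T\sqrt{n}}|w_n(t)-g(t)|\,dt \,+\, \int_{|t|> T\sqrt{n}}g(t)\,dt\Big), $$
and the Gaussian tail integral tends to $0$. It then remains to show the first integral tends to $0$, which I would do by splitting $\{|t|\le T\sqrt{n}\}$ into a fixed central ball $\{|t|\le A\}$ and an intermediate annulus $\{A<|t|\le T\sqrt{n}\}$, treating the former by pointwise convergence and the latter by a uniform Gaussian-type domination, then sending $A\to\infty$ after $n\to\infty$.

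On the central ball, $f(t/\sqrt{n})\to f(0)=1$ uniformly on compacts by continuity of $f$, and, since $X_1$ is isotropic, $v(s)=1-|s|^2/2+o(|s|^2)$ as $s\to 0$, so $v(t/\sqrt{n})^n\to e^{-|t|^2/2}$ pointwise; dominated convergence then gives $\int_{|t|\le A}|w_n-g|\to 0$ for each fixed $A$. The crux is a uniform sub-Gaussian bound for $v$ valid on \emph{all} the relevant scales. Under the third-moment hypothesis, the inequality $|e^{ix}-1-ix+\tfrac{x^2}{2}|\le\tfrac{|x|^3}{6}$ applied with $x=s\langle X_1,\theta\rangle$, $\theta=s/|s|$, together with $\E\langle X_1,\theta\rangle^2=1$, gives
$$ |v(s)| \, \le \, 1-\tfrac{|s|^2}{2}\Big(1-\tfrac{|s|\beta_3}{3}\Big) \, \le \, e^{-|s|^2/3}\qquad (|s|\le 1/\beta_3), $$
so for $T=1/\beta_3$ one has $|w_n(t)|\le|v(t/\sqrt{n})|^n\le e^{-|t|^2/3}$ on the whole ball $\{|t|\le T\sqrt{n}\}$, uniformly in $n$. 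Since $g(t)\le e^{-|t|^2/3}$ as well, this furnishes the dominating function for the central ball and also bounds $\int_{A<|t|\le T\sqrt{n}}|w_n-g|\le 2\int_{|t|>A}e^{-|t|^2/3}\,dt$, which is arbitrarily small for $A$ large. In the non-lattice case there is no moment assumption, but $|v(s)|<1$ for every $s\neq 0$; so $|v(s)|\le e^{-c|s|^2}$ for $|s|\le\delta_0$ while $\sup_{\delta_0\le|s|\le T}|v(s)|=1-\eta<1$, for some $c,\delta_0,\eta>0$. One then repeats the above on $\{|t|\le\delta_0\sqrt{n}\}$ with $c$ in place of $1/3$, and on $\{\delta_0\sqrt{n}\le|t|\le T\sqrt{n}\}$ uses $|w_n(t)|\le(1-\eta)^n$, so the integral over this last region is at most (a constant depending on $T,\delta_0$) times $n^{d/2}(1-\eta)^n\to 0$ — exponential decay beats the polynomial growth of the volume, which is exactly why $T$ may be taken arbitrary here.

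The main obstacle is this intermediate/boundary regime where $|t|$ is comparable to $\sqrt{n}$: one must control $|v(s)|^n$ for all $s=t/\sqrt{n}$ with $|s|\le T$ simultaneously, not merely near $s=0$. The third-moment estimate does so cleanly only up to $|s|\le 1/\beta_3$, which is what pins the admissible radius to $T=1/\beta_3$; for non-lattice $X_1$ one trades the explicit sub-Gaussian bound for the softer estimate $|v|\le 1-\eta$ away from the origin, which still suffices because the $L^1$-mass of $w_n$ there is at most the volume of that region times $(1-\eta)^n$ and hence vanishes. Combining the central, intermediate and boundary contributions and letting $A\to\infty$ after $n\to\infty$ yields \eqref{(4.1)}; this is the Fourier-analytic route underlying \cite[Theorem 1.3]{B-M}.
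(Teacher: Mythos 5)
The paper does not prove this lemma itself --- it is imported verbatim from \cite[Theorem 1.3]{B-M} --- and your Fourier-analytic argument is correct and is essentially the argument of that reference: the compact support of $f$ truncates the inversion integral at $|t|\le T\sqrt{n}$, the third-order Taylor estimate yields $|v(s)|\le e^{-|s|^2/3}$ on $|s|\le 1/\beta_3$ (here it is worth noting explicitly that $\beta_3\ge 1$ by Lyapunov, so $|s|\le 1$ and $1-|s|^2/2\ge 0$), and compactness plus $|v|<1$ off the origin handles the non-lattice case. No gaps beyond these cosmetic points.
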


Recall that, in Theorems \ref{1.1}-\ref{1.2} we assume that $\E\,|X|^2 < \infty$, 
which implies $\E\,|Z_n|^2 = \frac{1}{n}\,\E\,|X|^2 + d \rightarrow d$
as $n \rightarrow \infty$. In addition,
the uniform convergence \eqref{(4.1)} is stronger than
\be\label{(4.2)}
\|p_n - \varphi\|_2 \, \rightarrow \, 0 \quad \mbox{as} \ \ n \rightarrow \infty,
\en
since
\bee
\|p_n - \varphi\|_2^2 
 & = & 
\int_{\R^d} (p_n(x) - \varphi(x))^2\,dx \\
 & \leq & 
\|p_n - \varphi\|_\infty \ \int_{\R^d} |p_n(x) - \varphi(x)|\,dx
 \ \leq \ 2\,\|p_n - \varphi\|_\infty.
\ene
By Proposition \ref{3.1}, both properties ensure 
that $D(Z_n||Z) \rightarrow 0$, and we obtain Theorem \ref{1.1}.

Now, let us turn to the Bernoulli case, that is, when $X_1$ has 
a uniform distribution on the discrete cube $\{-1,1\}^d$. 
Theorem \ref{1.2} may slightly be refined in one direction by weakening
the condition \eqref{(1.4)}. As before, $\|t\|$ denotes the distance 
from the point $t \in \R^d$ to the lattice $\pi \Z^d$. 

\begin{theorem}\label{4.2}

Suppose that the characteristic function of $X$
satisfies
\be\label{(4.3)}
f(\pi k) = 0 \quad \mbox{for all} \ \ k \in \Z^d, \ k \neq 0,
\en
together with
\be\label{(4.4)}
\int_{\R^d} \frac{|f(t)|\, |f'(t)|}{\|t\|^{d-1}}\,dt < \infty.
\en
Then we have the entropic CLT, that is, $D(Z_n||Z) \rightarrow 0$ as 
$n \rightarrow \infty$. Conversely, if the entropic CLT holds together with
\be\label{(4.5)}
\int_{\R^d} |f(t)|\,dt < \infty, \quad
\int_{\R^d} \frac{|f'(t)|}{\|t\|^{d-1}}\,dt < \infty,
\en
then $f$ satisfies \eqref{(4.3)}. In this case the uniform local limit
theorem \eqref{(4.1)} takes place.

\end{theorem}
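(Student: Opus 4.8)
The plan is to pass to characteristic functions and use Plancherel's theorem, and then to isolate the ``resonances'' of the Bernoulli characteristic function. Write $f_n(t)=f(t/\sqrt n)\,h_n(t)$, where $h_n(t)=\prod_{j=1}^d\cos^n(t_j/\sqrt n)$ is the characteristic function of $(X_1+\dots+X_n)/\sqrt n$, so that $(2\pi)^{d/2}\Delta_n=\|f_n-g\|_2$. Since $\E\,|Z_n|^2=d+\E\,|X|^2/n\to d$, Proposition \ref{3.1} reduces the whole statement to deciding whether $\Delta_n\to0$. Substituting $s=t/\sqrt n$ turns this into a question about $n^{d/2}\int_{\R^d}|f(s)|^2\prod_j\cos^{2n}(s_j)\,ds$ together with harmless terms coming from $g$. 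The key structural fact is that $\prod_j|\cos s_j|^{2n}$ is an approximate identity concentrated on the lattice $\pi\Z^d$: on each cell $Q_k=\pi k+[-\tfrac\pi2,\tfrac\pi2]^d$ one has $\prod_j|\cos s_j|^{2n}\le e^{-n|s-\pi k|^2}$ (from $\cos v\le e^{-v^2/2}$ on $[0,\tfrac\pi2]$, since $\log\cos v+\tfrac12 v^2$ decreases there from $0$), and after rescaling by $\sqrt n$ around $\pi k$ the cell $Q_0$ reproduces $g$ while the remaining cells are governed by the value $f(\pi k)$. So the problem reduces to a cell-by-cell analysis.

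For the direct implication I would split $\|f_n\|_2^2=n^{d/2}\int|f(s)|^2\prod_j\cos^{2n}(s_j)\,ds=\sum_k(\text{cell }Q_k)$. The cell $Q_0$ contributes, by dominated convergence (dominant $e^{-|s|^2}$, using $f(0)=1$), the limit $\pi^{d/2}=\|g\|_2^2$; likewise $\langle f_n,g\rangle\to\pi^{d/2}$ by dominated convergence against $|g|\in L^1$. For $k\ne0$ the essential input is \eqref{(4.3)}: since $f(\pi k)=0$, the fundamental theorem of calculus along the ray from $\pi k$ gives $|f(\pi k+v)|^2\le2\int_0^{|v|}|f||f'|$, integrated radially. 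Substituting this in, passing to polar coordinates, interchanging the order of the $r$- and $\rho$-integrations, and using the elementary identity $n^{d/2}\int_r^\infty e^{-n\rho^2}\rho^{d-1}\,d\rho=G_d(\sqrt n\,r)$ with $G_d(a)=\int_a^\infty e^{-y^2}y^{d-1}\,dy$ bounded and $G_d(\sqrt n\,r)\to0$ for each fixed $r>0$, the sum of the $k\ne0$ contributions is bounded by a constant times $\int_{\R^d\setminus Q_0}\frac{|f(t)||f'(t)|}{\|t\|^{d-1}}\,G_d(\sqrt n\,\|t\|)\,dt$ --- the polar Jacobian $r^{d-1}$ being precisely what produces the weight $\|t\|^{-(d-1)}$. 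By dominated convergence this tends to $0$, the dominating function being $G_d(0)\frac{|f||f'|}{\|t\|^{d-1}}\in L^1$ by \eqref{(4.4)}. Hence $\|f_n\|_2^2\to\pi^{d/2}$, so $\|f_n-g\|_2\to0$, $\Delta_n\to0$, and $D(Z_n||Z)\to0$ by Proposition \ref{3.1}.

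For the converse, assume $D(Z_n||Z)\to0$ and \eqref{(4.5)}, and suppose towards a contradiction that $f(\pi m)\ne0$ for some $m\ne0$. Applying dominated convergence to the single cell $Q_m$ (after $s=\pi m+y/\sqrt n$, using $\cos^{2n}(y_j/\sqrt n)\to e^{-y_j^2}$ and continuity of $f$) shows that the $Q_m$-contribution to $\|f_n\|_2^2$ tends to $|f(\pi m)|^2\pi^{d/2}$, while the $Q_0$-contribution tends to $\pi^{d/2}$; as all cell contributions are nonnegative, $\liminf_n\|f_n\|_2^2\ge\pi^{d/2}(1+|f(\pi m)|^2)$, whereas $\langle f_n,g\rangle\to\pi^{d/2}=\|g\|_2^2$, so $\liminf_n\|f_n-g\|_2^2\ge\pi^{d/2}|f(\pi m)|^2>0$ and $\Delta_n\not\to0$. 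To derive a contradiction I invoke the necessity part of Proposition \ref{3.1}, which needs the densities $p_n$ to be uniformly bounded; I would check this from \eqref{(4.5)} by estimating $\|p_n\|_\infty\le(2\pi)^{-d}\|f_n\|_1$ cell by cell and bounding $|f(\pi k+v)|\le|f(\pi k)|+|v|\int_0^1|f'(\pi k+uv)|\,du$. The term $|f(\pi k)|$ contributes a summable amount since $\sum_k|f(\pi k)|\le|Q|^{-1}\big(\int_{\R^d}|f|+C_d\int_{\R^d}\frac{|f'|}{\|t\|^{d-1}}\big)<\infty$ (average $f$ over $Q_k$ and apply the same ray/Fubini rearrangement), and the gradient term is controlled, after the substitution $w=uv$ and Fubini, by $H_d(0)\int_{\R^d}\frac{|f'|}{\|t\|^{d-1}}<\infty$ with $H_d(a)=\int_a^\infty b^{d-1}e^{-b^2/2}\,db$. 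Thus $p_n$ is uniformly bounded, Proposition \ref{3.1} forces $\Delta_n\to0$, contradicting the above; hence \eqref{(4.3)} holds. Finally, with \eqref{(4.3)} and \eqref{(4.5)} in force, the uniform local limit theorem follows from $\|p_n-\varphi\|_\infty\le(2\pi)^{-d}\|f_n-g\|_1$: the $Q_0$-part of $\|f_n-g\|_1$ tends to $0$ by dominated convergence, the contribution of $g$ outside $Q_0$ is a Gaussian tail, and the cells $Q_k$, $k\ne0$, reduce via $|f(\pi k+v)|\le|v|\int_0^1|f'(\pi k+uv)|\,du$ and $w=uv$ to $\sum_{k\ne0}\int_{Q_k}\frac{|f'(t)|}{\|t\|^{d-1}}\,H_d(\sqrt n\,\|t\|)\,dt\to0$. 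The main obstacle throughout is the bookkeeping over the infinitely many resonance cells: both the summability in the direct part and the uniform boundedness of $p_n$ in the converse hinge on extracting the weighted hypotheses \eqref{(4.4)}/\eqref{(4.5)} out of the fundamental-theorem-of-calculus bound along rays, combined with a polar-coordinate/Fubini rearrangement precisely tailored to reproduce the weight $\|t\|^{-(d-1)}$.
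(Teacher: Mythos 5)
Your proposal is correct, and at the level of overall architecture it coincides with the paper's: both reduce the entropic CLT to the $L^2$-convergence $\Delta_n\to 0$ via Proposition \ref{3.1} (using $\E\,|Z_n|^2\to d$ for sufficiency and uniform boundedness of $p_n$ for necessity), and both control the Fourier side by partitioning $\R^d$ into the resonance cells $Q_k=\pi k+[-\tfrac{\pi}{2},\tfrac{\pi}{2}]^d$ of the Bernoulli characteristic function. The difference is one of self-containment: the paper's proof of Theorem \ref{4.2} only proves Lemma \ref{4.3} (uniform boundedness of $p_n$ under \eqref{(4.5)}) and \emph{cites} \cite{B-M} for the three local-limit facts --- that \eqref{(4.3)}+\eqref{(4.4)} give \eqref{(4.2)}, that \eqref{(4.3)}+\eqref{(4.5)} give \eqref{(4.1)}, and that \eqref{(4.2)} forces \eqref{(4.3)}. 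You instead prove these from scratch, and your arguments are sound: the bound $|\cos v|\le e^{-v^2/2}$ on $[-\tfrac{\pi}{2},\tfrac{\pi}{2}]$, the fundamental-theorem-of-calculus estimate along rays from $\pi k$ (legitimate since $f$ is $C^1$ by the second-moment assumption), and the polar-coordinate/Fubini interchange producing the weight $\|t\|^{-(d-1)}$ and the cutoff $G_d(\sqrt{n}\,\|t\|)$ are exactly the mechanism the paper itself uses inside the proof of Lemma \ref{4.3}, and your dominated-convergence passage over the sum of cells is justified by \eqref{(4.4)} (resp. \eqref{(4.5)}). Your converse is phrased slightly differently --- you show directly that $f(\pi m)\neq 0$ forces $\liminf_n\|f_n-g\|_2^2\ge \pi^{d/2}|f(\pi m)|^2>0$ and then contradict the necessity half of Proposition \ref{3.1} --- but this is just the contrapositive of the implication \eqref{(4.2)}$\Rightarrow$\eqref{(4.3)} that the paper imports, and your uniform-boundedness step reproduces Lemma \ref{4.3} essentially verbatim (including the summability of $\sum_k|f(\pi k)|$ via cell-averaging). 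In short: same method, but your write-up buys independence from the companion paper at the cost of redoing its local limit theorems.
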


The point of the refinement is that \eqref{(4.4)} is weaker than \eqref{(4.5)}, 
which is exactly the condition \eqref{(1.4)} in Theorem \ref{1.2}. 
In dimension $d=1$, \eqref{(4.4)} is fulfilled whenever $f$ and $f'$ are 
in $L^2$ (by Cauchy's inequality), that is, when the density $p$ of 
the random variable $X$ satisfies
$$
\int_{-\infty}^\infty (1+x^2)\,p(x)^2\,dx < \infty
$$
(which holds automatically, if $p$ is bounded). If $d \geq 2$, \eqref{(4.4)} 
is fulfilled under the decay assumptions \eqref{(1.6)} with a weaker parameter 
constraint $\alpha > \frac{1}{2}$. This is the case, for example, where $X$ 
is uniformly distributed in the (solid) cube $[-1,1]^d$, while \eqref{(4.5)} 
does not hold. In \cite{B-M}, it was shown that the properties 
\eqref{(4.3)}-\eqref{(4.4)} imply the $L^2$-convergence of densities 
\eqref{(4.2)}, while \eqref{(4.3)} together with a stronger assumption 
\eqref{(4.5)} leads to the uniform convergence \eqref{(4.1)}. Hence, 
we can apply Proposition \ref{3.1} to conclude that $D(Z_n||Z) \rightarrow 0$. 
It was also shown there that the property \eqref{(4.3)} is fulfilled under the 
$L^2$-convergence \eqref{(4.2)}. In order to arrive at a similar conclusion 
under an apriori weaker entropic CLT, we involve the assumption \eqref{(4.5)} 
and prove here:

\begin{lemma}\label{4.3}

Suppose that $X_1$ has a uniform distribution
on the discrete cube $\{-1,1\}^d$. If the condition \eqref{(4.5)} 
is fulfilled, then $Z_n$ have uniformly bounded densities $p_n$.

\end{lemma}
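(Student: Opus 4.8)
\smallskip

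\noindent\emph{Plan of proof.} The plan is to pass to characteristic functions and bound $\|f_n\|_1$ uniformly in $n$. Since $X_1$ is uniform on $\{-1,1\}^d$, its characteristic function is $v(u)=\prod_{j=1}^{d}\cos u_j$, so the characteristic function of the normalized sum \eqref{(1.1)} is
$$
f_n(t)\,=\,f\big(t/\sqrt n\,\big)\,v\big(t/\sqrt n\,\big)^{n},\qquad t\in\R^{d}.
$$
If $\sup_n\|f_n\|_1<\infty$, then each $f_n$ is integrable, so $Z_n$ has a bounded continuous density $p_n$ recovered by Fourier inversion and $\|p_n\|_\infty\le(2\pi)^{-d}\|f_n\|_1$, which is the assertion. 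Changing variables $t=\sqrt n\,u$ gives $\|f_n\|_1=n^{d/2}\int_{\R^{d}}|f(u)|\,|v(u)|^{n}\,du$. Since $|v|$ is $\pi\Z^d$-periodic and $|\cos\theta|\le e^{-\theta^{2}/2}$ for $|\theta|\le\tfrac\pi2$ (so $|v(s)|^{n}\le e^{-n|s|^{2}/2}$ on the cube $\Pi=[-\tfrac\pi2,\tfrac\pi2)^{d}$), I would partition $\R^{d}$ into the translates $\pi k+\Pi$, $k\in\Z^d$, and write
$$
\|f_n\|_1\,=\,n^{d/2}\sum_{k\in\Z^{d}}\int_{\Pi}|f(s+\pi k)|\,|v(s)|^{n}\,ds.
$$
The term $k=0$ is at most $n^{d/2}\int_{\R^{d}}e^{-n|s|^{2}/2}\,ds=(2\pi)^{d/2}$ because $|f|\le1$, so only the sum over $k\neq0$ needs a uniform bound.

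For $k\neq0$ it would not help to isolate $f(\pi k)$, since under \eqref{(4.5)} alone the values $f(\pi k)$ need not be summable. Instead I would use the classical pointwise estimate for $C^{1}$ functions: fixing a radius $r\in(0,\tfrac\pi2)$ and denoting by $B_r(x)$ the ball of radius $r$ about $x$,
$$
|f(x)|\ \le\ \frac{1}{|B_r|}\int_{B_r(x)}|f(y)|\,dy\ +\ c_d\int_{B_r(x)}\frac{|f'(y)|}{|x-y|^{d-1}}\,dy
$$
with a constant $c_d$ depending only on $d$; this is legitimate because the second moment hypothesis makes $f$ continuously differentiable, and it follows by averaging $f(x)-f(y)$ over $B_r(x)$ and integrating along segments. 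Taking $x=s+\pi k$, multiplying by $n^{d/2}|v(s)|^{n}$, integrating in $s\in\Pi$ and summing over $k\neq0$ splits the bound into an averaging part and a gradient part.

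For fixed $s\in\Pi$ the balls $B_r(s+\pi k)$, $k\in\Z^d$, are pairwise disjoint (their centres are $\pi$-separated and $2r<\pi$), so by Tonelli the averaging part is at most $\frac{n^{d/2}}{|B_r|}\,\|f\|_1\int_{\Pi}|v(s)|^{n}\,ds\le\frac{(2\pi)^{d/2}}{|B_r|}\,\|f\|_1$, finite by the first condition in \eqref{(4.5)}. The gradient part equals $c_d\int_{\R^{d}}|f'(y)|\,K_n(y)\,dy$, where
$$
K_n(y)\,=\,\sum_{k\neq0}n^{d/2}\int_{\Pi}\frac{|v(s)|^{n}\,1_{\{|s+\pi k-y|<r\}}}{|s+\pi k-y|^{d-1}}\,ds.
$$
Only lattice points with $|y-\pi k|<r+\tfrac{\pi\sqrt d}{2}$ contribute, and each summand is dominated by $J_n(y-\pi k)$ where $J_n(w):=n^{d/2}\int_{\R^{d}}e^{-n|s|^{2}/2}\,|s-w|^{-(d-1)}\,ds$. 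The crux is the estimate $J_n(w)\le C_d\,|w|^{-(d-1)}$, uniform in $n\ge1$ and $w\neq0$: the substitution $s=\sigma/\sqrt n$ turns $J_n(w)$ into $n^{(d-1)/2}I(\sqrt n\,w)$ with $I(W)=\int_{\R^{d}}e^{-|\sigma|^{2}/2}\,|\sigma-W|^{-(d-1)}\,d\sigma$, a routine splitting of this Gaussian--Riesz convolution gives $I(W)\le C_d\max(1,|W|)^{-(d-1)}$, and hence $n^{(d-1)/2}I(\sqrt n\,w)\le C_d|w|^{-(d-1)}$ whether $\sqrt n\,|w|\ge1$ or $\sqrt n\,|w|<1$. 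Since for each $y$ at most $N=N(d)$ lattice points $\pi k$ with $k\neq0$ lie within distance $r+\tfrac{\pi\sqrt d}{2}$ of $y$, and $|y-\pi k|\ge\|y\|$ for each of them, one gets $K_n(y)\le NC_d\,\|y\|^{-(d-1)}$, so the gradient part is at most $c_dNC_d\int_{\R^{d}}|f'(y)|\,\|y\|^{-(d-1)}\,dy$, finite by the second condition in \eqref{(4.5)}. Adding the three contributions yields $\sup_n\|f_n\|_1<\infty$.

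The genuinely delicate step is the uniform bound $J_n(w)\le C_d|w|^{-(d-1)}$, which is exactly where the $n^{d/2}$ normalization of the density and the weight $\|t\|^{-(d-1)}$ appearing in \eqref{(4.5)} are in balance; all the rest is bounded-overlap counting, Tonelli's theorem, and the elementary Gaussian estimates already used in Section~\ref{2}.
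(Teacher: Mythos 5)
Your proposal is correct, and it reaches the same conclusion by a genuinely different decomposition of the off-lattice contribution. The paper, after the same reduction (Fourier inversion, rescaling, partition into the cubes $Q_k=Q+\pi k$, and the bound $|v(s)|^n\le e^{-n|s|^2/2}$ on $Q$), splits $|f(\pi k+t)|\le |f(\pi k)|+|t|\int_0^1|f'(\pi k+\xi t)|\,d\xi$: it first proves that \eqref{(4.5)} forces $\sum_k|f(\pi k)|<\infty$ (inequality \eqref{(4.8)}), which controls the constant terms against $n^{d/2}\int_Q e^{-n|t|^2/2}dt\le(2\pi)^{d/2}$, and then handles the increment by the change of variables $\xi t=s$, $\xi=\sqrt n\,|s|/u$, producing $n^{-d/2}\int_Q |f'(s+\pi k)|\,|s|^{-(d-1)}e^{-n|s|^2/2}ds$ up to constants. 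You instead replace the value $f(\pi k)$ by a ball average plus a Riesz potential of $|f'|$, use bounded overlap of the balls $B_r(s+\pi k)$ for the averaging part, and reduce the gradient part to the uniform Gaussian--Riesz bound $J_n(w)\le C_d|w|^{-(d-1)}$; this last estimate is the exact analogue of the paper's change-of-variables computation, so the two arguments are of comparable weight. What your route buys is that you never need the summability of the lattice values $f(\pi k)$ as a separate intermediate fact; what the paper's route buys is that \eqref{(4.8)} is of independent use elsewhere in the argument for Theorem \ref{4.2}. One aside in your write-up is wrong, though harmless: you assert that under \eqref{(4.5)} the values $f(\pi k)$ need not be summable, but the paper's \eqref{(4.8)} shows they always are, by precisely the averaging-plus-gradient estimate you yourself invoke (compare your pointwise $C^1$ inequality with the paper's bound $\pi^d|f(\pi k)|\le\int_{Q_k}|f|+c_d\int_{Q_k}|f'(t)|\,\|t\|^{-(d-1)}dt$). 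So the paper's more naive-looking isolation of $f(\pi k)$ does in fact work under \eqref{(4.5)}; your method simply fuses that summability argument with the main estimate.
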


Having this assertion, we therefore complete the proof of Theorem \ref{4.2}
and of Theorem \ref{1.2} by appealing to Proposition \ref{3.1} once more.

\begin{proof}[{\bf Proof of Lemma} \ref{4.3}] 
Put $v(t) = \cos(t_1) \dots \cos(t_d)$ for $t = (t_1,\dots,t_d) \in \R^d$.
By the assumption \eqref{(4.5)}, the characteristic functions 
$$
f_n(t) = f\Big(\frac{t}{\sqrt{n}}\Big)\,v^n\Big(\frac{t}{\sqrt{n}}\Big)
$$ 
are integrable. Hence, $Z_n$ have continuous densities 
given by the Fourier inversion formula
\be\label{(4.6)}
p_n(x) \, = \, \frac{1}{(2\pi)^d} \int_{\R^d} e^{-i\left<t,x\right>} f_n(t)\,dt.
\en
Let us partition $\R^d$ into the cubes $Q_k = Q + \pi k$, 
$Q = [-\frac{\pi}{2}, \frac{\pi}{2}]^d$, $k \in \Z^d$, so that 
$\|t\| = |t - \pi k|$ for $t \in Q_k$. Splitting the integration in \eqref{(4.6)},
we can write
$$
p_n(x) \, = \, \frac{1}{(2\pi)^d} \sum_{k \in \Z^d} I_{n,k}(x), \qquad
I_{n,k}(x) \, = \, n^{d/2}
\int_{Q_k} e^{-i\left<t,x\right>\sqrt{n}}\, f(t) v^n(t)\,dt.
$$
Putting $w_k(t) = f(\pi k + t)$ and using the periodicity of the cosine 
function together with the bound 
$0 \leq \cos(u) \leq e^{-u^2/2}$ for $|u| \leq \frac{\pi}{2}$, we have
$$
|I_{n,k}(x)| \, \leq \, n^{d/2} J_{n,k}, \quad 
J_{n,k} \, = \, \int_Q |w_k(t)|\,e^{-n |t|^2/2} \,dt.
$$
By Taylor's formula,
\be\label{(4.7)}
|f(\pi k + t) - f(\pi k)| \, \leq \, |t|
\int_0^1 |f'(\pi k + \xi t)|\, d\xi, \qquad t \in \R^d.
\en
Hence, changing the variable $\xi t = s$, we get
\bee
\int_Q |f(\pi k + t) - f(\pi k)|\, dt
 & \leq &
\int_0^1 \int_Q |f'(\pi k + \xi t)|\,|t|\, d\xi\, dt \\
 & = &
\int_Q \bigg[|f'(\pi k + s)|\,|s|
\int_{\frac{2}{\pi}\,\|s\|_\infty}^1 \frac{d\xi}{\xi^{d+1}} \bigg]\, ds
 \, \leq \,
c_d \int_Q \frac{|f'(\pi k + s)|}{|s|^{d-1}}\, ds
\ene
with some constant $c_d$ depending on $d$ only, where 
$\|s\|_\infty = \max_k |s_k|$ for $s = (s_1, \dots, s_d) \in \R^d$. Hence
$$
\pi^d\ |w_k(0)| \, = \, 
\pi^d\ |f(\pi k)| \, \leq \, \int_{Q_k} |f(t)|\, dt + 
c_d \int_{Q_k} \frac{|f'(t)|}{\|t\|^{d-1}}\, dt.
$$
The next summation over all $k$ leads to
\be\label{(4.8)}
\sum_{k \in \Z^d} |w_k(0)| \, = \, 
\sum_{k \in \Z^d} |f(\pi k)| \, \leq \, \frac{1}{\pi^d}
\int_{\R^d} |f(t)|\, dt + 
\frac{c_d}{\pi^d} \int_{\R^d} \frac{|f'(t)|}{\|t\|^{d-1}}\, dt \, < \, \infty,
\en
where we applied the assumption \eqref{(4.5)}.
Put
$$
\widetilde J_{n,k} \, = \, \int_Q (|w_k(t)| - |w_k(0)|)\,e^{-n |t|^2/2} \,dt.
$$
By \eqref{(4.7)},
$$
|w_k(t)| \, \leq \, |w_k(0)| + |t| \int_0^1 |w_k'(\xi t)|\, d\xi.
$$
Hence, again changing the variable $\xi t = s$, and then 
$\xi = \sqrt{n}\,|s|\,\frac{1}{u}$, we get
\bee
\widetilde J_{n,k}
 & \leq &
\int_Q \int_0^1 |t|\, |w_k'(\xi t)|\, e^{-n |t|^2/2} \,dt\,d\xi \\
 & = &
\int_Q |w_k'(s)|\, |s| \ 
\bigg[\int_{\frac{2}{\pi} \|s\|_{\infty}}^1 \, \xi^{-d-1}\, 
e^{-n|s|^2/2\xi^2} \,d \xi \bigg]\,ds \\
 & \leq &
n^{-d/2}
\int_Q |w_k'(s)|\, |s|^{-(d-1)} \ 
\bigg[\int_{|s|\sqrt{n}}^\infty \,
u^{d-1}\, e^{-u^2/2} \,du\bigg]\,ds \\
 & \leq & 
c_d\, n^{-d/2}
\int_Q \frac{|w_k'(s)|}{|s|^{d-1}}\, e^{-n |s|^2/2}\,ds
\ene
with some constant $c_d$ depending on the dimension, only. 
Performing summation over all $k$, we get
$$
n^{d/2} \ \sum_{k \in \Z^d} \widetilde J_{n,k} \, \leq \, c_d \int_{\R^d} 
\frac{|f'(t)|}{\|t\|^{d-1}}\,e^{-n\,\|t\|^2/2} \,dt \, \leq \, 
c_d \int_{\R^d} \frac{|f'(t)|}{\|t\|^{d-1}} \,dt.
$$
Due to \eqref{(4.8)}, with some other $d$-dependent constants
$$
n^{d/2} \ \sum_{k \in \Z^d} J_{n,k} \, \leq \, c_d \int_{\R^d} |f(t)|\,dt + 
c_d \int_{\R^d} \frac{|f'(t)|}{\|t\|^{d-1}} \,dt \, < \, \infty,
$$
and thus $\sum_{k \in \Z^d} |I_{n,k}(x)|$ is bounded by a constant 
which does not depend on $x$.
\end{proof}

\vskip5mm
\noindent
{\bf Remark 4.4.}
To better realize the meaning of Theorem \ref{1.1},
let us also comment on the relationship between the entropic and
transport CLT's. Given two random vectors $X$ and $Y$ in $\R^d$ 
with distributions $\mu$ and $\nu$ respectively, the (quadratic)
Kantorovich distance is defined as
$$
W_2(\mu,\nu) \, = \, W_2(X,Y) \, = \,
\inf_\lambda \bigg(\int_{\R^d} \int_{\R^d} |x-y|^2\,d\lambda(x,y)\bigg)^{1/2}
$$
where the infimum is running over all (Borel) probability measures
$\lambda$ on $\R^d \times \R^d$ with marginals $\mu$ and $\nu$. It 
represents a metric in the space $M_2(\R^d)$ of all probability measures
on $\R^d$ with finite second moment, which is closely related to the 
weak topology. More precisely, given a sequence $\mu_n$ and a ``point" 
$\nu$ in $M_2(\R^d)$, the convergence $W_2(\mu_n,\nu) \rightarrow 0$ 
holds true as $n \rightarrow \infty$ if and only if $\mu_n$ are weakly 
convergent to $\nu$, that is, 
$$
\int_{\R^d} u(x)\,d\mu_n(x) \rightarrow \int_{\R^d} u(x)\,d\nu(x)
$$
for any bounded continuous function $u$ on $\R^d$, and
$\int_{\R^d} |x|^2\,d\mu_n(x) \rightarrow \int_{\R^d} |x|^2\,d\nu(x)$
(cf. e.g. \cite{V}, p. 212). 

When $\nu$ is the standard Gaussian measure on $\R^d$, the relationship
of $W_2$ with relative entropy was emphasized by Talagrand \cite{T} who
showed that
$$
W_2^2(X,Z) \leq 2\,D(X||Z)
$$
holding for any random vector $X$ in $\R^d$ with $Z$ a standard normal
random vector.
Returning to the setting of Theorem \ref{1.1}, define the normalized sums
$$
Z_n' = Z_n - \frac{1}{\sqrt{n}}\,X =
\frac{1}{\sqrt{n}}\,(X_1 + \dots + X_n).
$$
By the classical CLT, the distributions $\mu_n'$ of $Z_n'$ are weakly 
convergent to the Gaussian limit $\nu$. Since also
$\E\,|Z_n|^2 = \E\,|Z|^2 = d$, the above characterization of the convergence
in the space $M_2(\R^d)$ ensures that $W_2(\mu_n',\nu) \rightarrow 0$,
which is a transport CLT. A similar conclusion can also be made
on the basis of Theorem \ref{1.1}. Indeed, choose for $f$
a characteristic function supported on a suitable small ball $|t| \leq T$,
so that $D(Z_n||Z) \rightarrow 0$, by \eqref{(1.3)}. Applying the Talagrand
transport-entropy inequality, we get
$$
W_2^2(Z_n',Z) \, \leq \, 2W_2^2(Z_n,Z) + \frac{2}{n}\,\E\,|X|^2 \, \leq \,
4\,D(Z_n||Z) + \frac{2}{n}\,\E\,|X|^2 \ \rightarrow \ 0.
$$

A similar approach was used in \cite{Bo} to study the rate of convergence
in the one-dimensional transport CLT under the 4-th moment assumption.

\section{Entropy bounds}\label{5}

Let $(X_n)_{n \geq 1}$ be a sequence of integer valued random vectors 
in $\R^d$, and let $X$ be a continuous random vector in $\R^d$ with 
finite second moment, independent of this sequence.
As before, we define the normalized sums
$$
Z_n = \frac{1}{\sqrt{n}}\,(X + X_1 + \dots + X_n).
$$
As is well-known, when the second moment $\E\,|U|^2$ of a continuous 
random vector $U$ in $\R^d$ is fixed, its entropy is maximized on the normal 
distribution with the same second moment (cf. e.g. \cite{C-T}). In the case
of independent and isotropic $X_n$'s, we have
$\E\,|Z_n|^2 = \frac{1}{n}\,\E\,|X|^2 + d \rightarrow d$ as 
$n \rightarrow \infty$. Hence $\limsup_{n\rightarrow \infty} h(Z_n) \leq h(Z)$, 
where $Z$ is a standard normal random vector in $\R^d$. The argument 
to derive a similar bound 
$\limsup_{n\rightarrow \infty} h(Z_n) \leq h(Z) + h(X)$ is based on two
elementary lemmas, which involve the discrete Shannon entropy
$$
H(Y) = - \sum_k p_k \log p_k.
$$
Here, $Y$ is a discrete random vector taking at most countably many values, 
say $y_k$, with probabilities $p_k$ respectively.

\begin{lemma}\label{5.1}

Let $X$ be a continuous random vector, and let $Y$ be a discrete random 
vector independent of $X$, both with values in the Euclidean space $\R^d$. 
Then
$$
h(X+Y) \leq h(X) + H(Y). 
$$

\end{lemma}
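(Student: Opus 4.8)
The plan is to exploit the fact that $X+Y$, conditioned on the value of $Y$, is just a translate of $X$, and that differential entropy is translation invariant. Write the distribution of $Y$ as $\P(Y = y_k) = p_k$ with $p_k > 0$ and the $y_k \in \R^d$ distinct. Since $X$ has a density $q$ on $\R^d$ and is independent of $Y$, the random vector $X+Y$ has density
$$
\rho(x) \, = \, \sum_k p_k\, q(x - y_k), \qquad x \in \R^d.
$$
Thus $\rho$ is a countable convex combination of the translated densities $q_k(x) := q(x-y_k)$, each of which has the same differential entropy $h(X)$ by translation invariance. So the statement reduces to the concavity of the differential entropy functional along mixtures, together with the bookkeeping of the mixing weights — which is exactly where the term $H(Y) = -\sum_k p_k \log p_k$ appears.

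The cleanest way to carry this out is to introduce the pair $(X+Y, Y)$ and compare entropies. First I would note that, because $Y$ takes only countably many values, $h(X+Y, Y)$ in the appropriate mixed (Lebesgue $\times$ counting) sense equals $h(X+Y \mid Y) + H(Y)$, while on the other hand $h(X+Y \mid Y=y_k) = h(X + y_k) = h(X)$ for each $k$, so $h(X+Y \mid Y) = h(X)$. Hence $h(X+Y,Y) = h(X) + H(Y)$. Then the desired bound follows from the fact that dropping the coordinate $Y$ cannot increase this joint entropy; equivalently, the chain rule gives $h(X+Y,Y) = h(X+Y) + H(Y \mid X+Y)$ with $H(Y\mid X+Y) \ge 0$, whence $h(X+Y) = h(X) + H(Y) - H(Y\mid X+Y) \le h(X) + H(Y)$. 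The only thing to be slightly careful about is that all the quantities involved are finite (or that the inequality holds vacuously otherwise): if $h(X) = -\infty$ the bound is trivial, and if $h(X) > -\infty$ one checks, using $\E\,|X|^2 < \infty$, that $h(X) < \infty$ as well, so the manipulations above are legitimate; $H(Y)$ may be $+\infty$, in which case there is again nothing to prove.

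The main obstacle is purely technical: justifying the identity $h(X+Y \mid Y) = h(X)$ and the non-negativity of the conditional discrete entropy $H(Y \mid X+Y)$ rigorously when the mixing is over a countably infinite set, i.e.\ making sure the interchange of summation and integration and the additivity of entropy for a mixed continuous/discrete pair are valid. If one prefers to avoid the conditional-entropy formalism altogether, an equivalent route is the direct estimate: write $\rho(x)\log\rho(x)$ and use $\sum_k p_k q_k(x) \ge p_j q_j(x)$ for each $j$ to bound $-\log \rho(x) \le -\log p_j - \log q_j(x)$ on the event "$j$ is the index realized", more precisely integrate $-\rho \log \rho$ against the decomposition $\rho = \sum_j p_j q_j$ and apply the pointwise bound $-\log\rho(x) \le -\log(p_j q_j(x))$ termwise; summing over $j$ reproduces $h(X) + H(Y)$ exactly. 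I would present whichever of the two is shorter given the conventions already fixed in the paper.
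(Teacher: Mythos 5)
Your proposal is correct, and your second (``direct estimate'') route is exactly the paper's proof: the density of $X+Y$ is the mixture $\sum_k p_k\,q(\cdot-y_k)$, and the pointwise bound $-\log\rho(x)\le -\log(p_k q(x-y_k))$ applied termwise yields $h(X)+H(Y)$. Your conditional-entropy formulation is the same computation in different notation (the nonnegativity of $H(Y\mid X+Y)$ is precisely that pointwise inequality), so there is nothing substantively different to compare.
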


Lemma \ref{5.1} can be derived implicitly from the ideas of \cite{N-P-S} 
about the entropy of mixtures of discrete and continuous random variables. 
An explicit statement appears in \cite[Lemma 11.2]{W-M} (see also \cite{M-T-B-S}). 
We include a proof for completeness:

\begin{proof}
Denote by $p$ the density of $X$ and let $p_k = P\{Y=y_k\}$
for some finite or infinite sequence $y_k$.
Since $X$ and $Y$ are independent, $X+Y$ has density
$$
q(z) = \sum_k p_k p(z-y_k). 
$$
We use the convention $u \log(u) = 0$ if $u=0$. Note that, if $p(z - y_k) = 0$, 
then
$$ 
p_k p(z-y_k) \log \sum_i p_i p(z-y_i) = 0 = p_k p(z-y_k) \log(p_k p(z-y_k)),
$$
while in the case $p(z-y_k) > 0$, we have
\bee
p_k p(z-y_k) \log \sum_i p_i p(z-y_i) 
 & = & 
p_k p(z-y_k) \log\Big(p_k p(z-y_k) + \sum_{i \neq k} p_i p(z-y_i)\Big) \\ 
 & & \hskip-20mm = \
p_k p(z-y_k)\, \bigg[\log(p_k p(z-y_k)) + 
\log\Big(1 + \frac{\sum_{i \neq k} p_i p(z-y_i)}{p_k p(z-y_k)}\Big) \bigg] \\ 
 & & \hskip-20mm \geq \
p_k p(z-y_k)\, \log(p_k p(z-y_k)).
\ene
Hence, for all $z$,
$$ 
p_k p(z-y_k)\, \log\, \sum_i p_i p(z-y_i) \, \geq \, 
p_k p(z-y_k)\, \log(p_k p(z-y_k)). 
$$
We may therefore conclude that
\bee
h(X+Y) 
 & = & 
- \int_{\R^d} q(z) \log q(z)\,dz \\ 
 & = & 
- \sum_k \int_{\R^d} p_k p(z-y_k)\, \log\, \sum_i p_i p(z-y_i) \, dz \\ 
 & \leq & 
- \sum_k \int_{\R^d} p_k p(z-y_k) \log(p_k p(z-y_k)) \, dz \\ 
 & = & 
- \sum_k p_k\, \bigg(\int_{\R^d} p(z-y_k) \log p_k \, dz + 
\int_{\R^d} p(z-y_k) \log p(z-y_k) \, dz \bigg) \\
 & = &
h(X) + H(Y).
\ene
\end{proof}

Let us note that a recent sharpening of Lemma \ref{5.1} appears in \cite[Theorem III.1]{M-M-S}, where it is shown that
$$ h(X + Y ) \leq h(X|Y) + T H(Y), $$
where $h(X|Y)$ is the conditional entropy, reducing to $h(X)$ on independence, and $T$ is the supremum of the total variation of the conditional densities from their ``mixture complements'', necessarily $T \leq 1$.

The following lemma is standard and has been used in several applications 
(see \cite{M}):

\begin{lemma}\label{5.2}

For any integer valued random variable $Y$ with finite second moment,
\be\label{(5.2)}
H(Y) \, \leq \, 
\frac{1}{2} \log\Big(2\pi e \Big(\Var(Y) + \frac{1}{12}\Big)\Big). 
\en

\end{lemma}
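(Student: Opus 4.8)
The plan is to bound the discrete Shannon entropy $H(Y)$ of an integer-valued random variable $Y$ by the differential entropy of a suitably chosen continuous random variable having the same (or slightly larger) second moment, and then invoke the Gaussian maximal-entropy property. The bridge between discrete and continuous entropy will be the random variable $\widetilde Y = Y + U$, where $U$ is uniform on $[-\tfrac12,\tfrac12]$ and independent of $Y$.

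First I would observe that, since $Y$ takes integer values and $U \in [-\tfrac12,\tfrac12]$, the map $(Y,U) \mapsto Y+U$ is a bijection onto its range up to a null set: knowing $\widetilde Y$ recovers $Y = \operatorname{round}(\widetilde Y)$ and $U = \widetilde Y - Y$. Consequently the density of $\widetilde Y$ is piecewise constant, equal to $p_k = P\{Y=k\}$ on the interval $(k-\tfrac12, k+\tfrac12)$, and a direct computation gives $h(\widetilde Y) = -\sum_k \int_{k-1/2}^{k+1/2} p_k \log p_k \, dx = -\sum_k p_k \log p_k = H(Y)$. So $H(Y) = h(\widetilde Y)$ exactly.

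Next, by independence of $Y$ and $U$, we have $\operatorname{Var}(\widetilde Y) = \operatorname{Var}(Y) + \operatorname{Var}(U) = \operatorname{Var}(Y) + \tfrac{1}{12}$. Now apply the maximum-entropy inequality: among all continuous random variables on $\R$ with a prescribed variance $\sigma^2$, the differential entropy is maximized by the Gaussian, whose entropy is $\tfrac12\log(2\pi e\,\sigma^2)$ (this is the one-dimensional instance of the fact recalled in Section \ref{5}, applied after centering, since entropy is translation-invariant). Hence
\[
H(Y) = h(\widetilde Y) \leq \frac{1}{2}\log\!\Big(2\pi e\,\operatorname{Var}(\widetilde Y)\Big) = \frac{1}{2}\log\!\Big(2\pi e\Big(\operatorname{Var}(Y) + \frac{1}{12}\Big)\Big),
\]
which is \eqref{(5.2)}.

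There is essentially no hard part here; the only point requiring minor care is the very first step, namely verifying that the dithered variable $\widetilde Y = Y+U$ has differential entropy exactly equal to $H(Y)$ — this hinges on $U$ being uniform on an interval of length $1$ so that the translates $(k-\tfrac12,k+\tfrac12)$ tile $\R$ without overlap, making the density of $\widetilde Y$ genuinely piecewise constant with the right values. (If one instead invoked Lemma \ref{5.1} in the form $h(Y+U) \le h(U) + H(Y) = 0 + H(Y)$, one would only get $h(\widetilde Y) \le H(Y)$; the reverse inequality, and hence equality, is what actually needs the tiling observation, and it is equality that we want so as not to lose a constant.) Everything else is the Gaussian maximum-entropy bound and the additivity of variance under independence.
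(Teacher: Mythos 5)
Your proof is correct and is essentially the same as the paper's: the dithered variable $\widetilde Y = Y+U$ with $U$ uniform on $[-\tfrac12,\tfrac12]$ has exactly the piecewise-constant density $q(x)=\sum_k p_k 1_{(k-\frac12,k+\frac12)}(x)$ that the paper constructs directly, and both arguments then compute $h(\widetilde Y)=H(Y)$, $\Var(\widetilde Y)=\Var(Y)+\tfrac{1}{12}$, and conclude via the Gaussian maximum-entropy bound. The only difference is presentational (probabilistic dithering versus writing down the density), and your remark about why equality rather than a one-sided bound is needed is a nice point of care.
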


The proof of Lemma \ref{5.2}, that we include for completeness, also combines 
both discrete and differential entropy:

\begin{proof}
Put $p_k = \P\{Y=k\}$, $k \in \Z$. Consider a continuous random variable 
$\widetilde Y$ with density $q$ defined to be
$$ 
q(x) = p_k \quad \mbox{if $x \in (k-\frac{1}{2}, k+\frac{1}{2})$}. 
$$
In other words,
$$ 
q(x) = \sum_k p_k 1_{(k-\frac{1}{2}, k+\frac{1}{2})}(x), \quad x \in \R. 
$$
Note that
$$ 
\E \widetilde Y = \sum_k p_k \int_{k-\frac{1}{2}}^{k+\frac{1}{2}} x \, dx = 
\sum_k \frac{p_k }{2} 
\Big(\Big(k+\frac{1}{2}\Big)^2 - \Big(k-\frac{1}{2}\Big)^2 \Big) = 
\sum_k k p_k = \E Y
$$
and similarly
$$ 
\E \widetilde Y^2 = 
\sum_k p_k \int_{k-\frac{1}{2}}^{k+\frac{1}{2}} x^2 \, dx = 
\E Y^2 + \frac{1}{12}. 
$$
Hence
$
\Var(\widetilde Y) = \Var(Y) + \frac{1}{12}. 
$
Also,
\bee
h(\widetilde Y)
 & = &
- \int_{-\infty}^{\infty} \sum_k p_k 1_{(k-\frac{1}{2}, k+\frac{1}{2})}(x)\, 
\log\, \sum_j p_j 1_{(j-\frac{1}{2}, j+\frac{1}{2})}(x) \, dx \\
 & = &
- \sum_k p_k \int_{k-\frac{1}{2}}^{k+\frac{1}{2}} \log p_k \, dx \ = \ H(Y).
\ene
Now, since Gaussian distributions maximize the differential entropy 
for a fixed variance, we conclude that
$$ 
H(Y) \, = \, h(\widetilde Y) \, \leq \, 
\frac{1}{2} \log\big(2 \pi e\, \Var(\widetilde Y)\big) \, = \,
\frac{1}{2} \log\Big(2 \pi e \Big(\Var(Y) + \frac{1}{12}\Big)\Big). 
$$
\end{proof}

We are now prepared to establish Theorem \ref{1.3}, in fact under somewhat weaker assumptions.

\begin{theorem}\label{5.3}
Given a sequence $X_n = (X_{n,1},\dots,X_{n,d})$ of random vectors 
with values in $\Z^d$, independent of $X$, assume that for each $k \leq d$, 
the components $X_{n,k}$, $n \geq 1$, are uncorrelated
and have variance one. Then,
$$ 
\limsup_{n \to \infty} \, h(Z_n) \, \leq \, h(X) + h(Z). 
$$
\end{theorem}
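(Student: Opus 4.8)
The plan is to reduce the differential entropy of the smoothed, normalized sum to a discrete entropy via Lemma \ref{5.1}, and then to control that discrete entropy coordinate by coordinate using Lemma \ref{5.2}. Write $S_n = X_1 + \dots + X_n$, which takes values in $\Z^d$, so that $Z_n = \frac{1}{\sqrt n}\,(X + S_n)$. Since $X$ is continuous and independent of $S_n$, the sum $X + S_n$ has a density, and the scaling property of differential entropy ($h(cW) = h(W) + d\log c$ for $c>0$) applied with $c = 1/\sqrt n$ gives
$$ h(Z_n) \ = \ h(X + S_n) - \frac{d}{2}\,\log n. $$
The first step is then to apply Lemma \ref{5.1} with $Y = S_n$, which yields $h(X + S_n) \le h(X) + H(S_n)$, and hence $h(Z_n) \le h(X) + H(S_n) - \frac{d}{2}\log n$.

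The second step is to bound $H(S_n)$ by passing to coordinates. Writing $S_n = (S_{n,1},\dots,S_{n,d})$ with $S_{n,k} = \sum_{j=1}^n X_{j,k}$, subadditivity of the discrete Shannon entropy for the $\Z^d$-valued vector $S_n$ gives $H(S_n) \le \sum_{k=1}^d H(S_{n,k})$. Each $S_{n,k}$ is integer valued, and since the variables $X_{j,k}$ ($j \ge 1$) are uncorrelated with variance one, $\Var(S_{n,k}) = \sum_{j=1}^n \Var(X_{j,k}) = n$. Lemma \ref{5.2} then gives $H(S_{n,k}) \le \frac12 \log\big(2\pi e\,(n + \tfrac{1}{12})\big)$ for each $k$, whence $H(S_n) \le \frac{d}{2}\,\log\big(2\pi e\,(n + \tfrac{1}{12})\big)$.

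Combining the two steps,
$$ h(Z_n) \ \le \ h(X) + \frac{d}{2}\,\log\Big(2\pi e\,\Big(1 + \frac{1}{12 n}\Big)\Big), $$
and letting $n \to \infty$ yields $\limsup_n h(Z_n) \le h(X) + \frac{d}{2}\log(2\pi e) = h(X) + h(Z)$, using $h(Z) = \frac{d}{2}\log(2\pi) + \frac{d}{2}$.

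I do not expect a genuine obstacle here. The points that need a word of care are: that $X + S_n$ really has a density, so that $h(X+S_n)$ and the scaling identity are meaningful (immediate from independence of $X$ from the sequence and absolute continuity of $X$); that each $S_{n,k}$ has a finite second moment so that Lemma \ref{5.2} applies (this is guaranteed by the variance-one hypothesis together with uncorrelatedness, which gives $\Var(S_{n,k}) = n$); and that only the trivial subadditivity bound $H(S_n) \le \sum_k H(S_{n,k})$ is used, never independence of the $X_n$ among themselves — which is precisely what lets the uncorrelatedness assumption replace the independence assumption of Theorem \ref{1.3}.
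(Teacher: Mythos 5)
Your proof is correct and follows essentially the same route as the paper's: Lemma \ref{5.1} applied to $X+S_n$, subadditivity of discrete entropy over coordinates, and Lemma \ref{5.2} with $\Var(S_{n,k})=n$ from uncorrelatedness. The remarks on why only uncorrelatedness (not independence) is needed match the paper's reasoning exactly.
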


\begin{proof}
Putting $S_n = X_1 + \dots + X_n$ and applying Lemma \ref{5.1}, we get
\begin{eqnarray*}
h(Z_n) \ = \
h\Big( \frac{X + S_n}{\sqrt{n}}\Big) 
 & = & 
h(X + S_n) - \frac{d}{2} \log n \\ 
 & \leq & 
h(X) + H(S_n) - \frac{d}{2} \log n.
\end{eqnarray*}
Note that
$$
S_n = (S_{n,1},\dots,S_{n,d}), \qquad 
S_{n,k} = X_{1,k} + \dots + X_{n,k} \quad (1 \leq k \leq d).
$$
By the well-known subadditivity of entropy along components of 
a random vector (an abstract property on product spaces which is 
irrelevant to the independence assumption, cf. e.g. \cite{L}), we have
$$
H(S_n) \, \leq \, H(S_{n,1}) + \dots + H(S_{n,d}).
$$
Here, the entropy functional on the left is applied to the $d$-dimensional
random vector, while on the right-hand side of this inequality we deal 
with one-dimensional entropies. For each $k \leq d$, the $k$-th component
$S_{n,k}$ of the random vector $S_n$ represents the sum of $n$ uncorrelated
integer valued random variables with variance one, so that
$\Var(S_{n,k}) = n$. Hence, by \eqref{(5.2)} applied to $Y = S_{n,k}$, 
we have
$$
H(S_{n,k}) \, \leq \,
\frac{1}{2} \log\Big(2 \pi e \Big(n + \frac{1}{12}\Big)\Big) \, = \, 
\frac{1}{2}\, \log(2 \pi e n) + O(1/n),
$$
and therefore
$$
H(S_n) \, \leq \, \frac{d}{2}\, \log(2 \pi e n) + O(1/n).
$$
We conclude that
$$ 
\limsup_{n \to \infty} \, \, h(Z_n) \, \leq \, 
h(X) + \frac{d}{2} \log(2 \pi e) \, = \, h(X) + h(Z).
$$
\end{proof}

{\bf Acknowledgements.} 
Research of S.B. was partially supported by the Simons Foundation and NSF
grant DMS-1855575. The authors are grateful to both referees for useful comments.

\end{document}